\newcommand{\beq}{\begin{eqnarray*}}
\newcommand{\eeq}{\end{eqnarray*}}
\newcommand{\bet}{\begin{tikzpicture}}
\newcommand{\ent}{\end{tikzpicture}}
\renewcommand{\theequation}{\thesection.\arabic{equation}}
\def\eqnarray{%
\stepcounter{equation}%
\let\@currentlabel=\theequation
\global\@eqnswtrue
\global\@eqcnt\z@
\tabskip\@centering
\let\\=\@eqncr
$$\halign to \displaywidth\bgroup\@eqnsel\hskip\@centering
$\displaystyle\tabskip\z@{##}$&\global\@eqcnt\@ne
\hfil$\displaystyle{{}##{}}$\hfil
&\global\@eqcnt\tw@$\displaystyle\tabskip\z@{##}$\hfil
\tabskip\@centering&\llap{##}\tabskip\z@\cr}
\newtheorem{theorem}{Theorem}[section]
\newtheorem{lemma}[theorem]{Lemma}
\newtheorem{proposition}[theorem]{Proposition}
\newtheorem{remark}[theorem]{Remark}
\newsavebox{\toy}
\savebox{\toy}{\framebox[0.65em]{\rule{0cm}{1ex}}}
\newcommand{\QED}{\usebox{\toy}}
\def\nlni{\par\ifvmode\removelastskip\fi\vskip\baselineskip\noindent}
\newenvironment{proof}{\nlni\begingroup\it Proof.\rm}{
\endgroup\vskip\baselineskip}
\begin{document}
\setlength{\baselineskip}{15pt}
\title{
Hitting Matrix and Domino Tiling with Diagonal Impurities : 
}
\author{
Fumihiko Nakano
\thanks{
Department of Mathematics,
Gakushuin University,
1-5-1, Mejiro, Toshima-ku, Tokyo, 171-8588, Japan.
e-mail : 
fumihiko@math.gakushuin.ac.jp}
\and 
Taizo Sadahiro
\thanks{Department of Computer Science, 
Tsuda Colledge, Tokyo, Japan.
e-mail : sadahiro@tsuda.ac.jp}
}

\maketitle

\begin{abstract}
As a continuation to our previous work
\cite{NS1, NS2}, 
we consider the domino tiling problem with impurities. 
(1)
if
we have more than two impurities on the boundary, 
we can compute the number of corresponding perfect matchings by using the hitting matrix method\cite{Fomin}.
(2)
we have 
an alternative proof of the main result in \cite{NS1} and result in (1) above using the formula by Kenyon-Wilson \cite{KW1, KW2} of counting the number of groves on the circular planar graph.
(3)
we study 
the behavior of the probability of finding the impurity at a given site when the size of the graph tends to infinity, as well as the scaling limit of those.
\end{abstract}

Mathematics Subject Classification (2000): 82B20, 05C70

\section{Introduction}
\subsection{Background}
Let
$G = (V(G), E(G))$
be a graph.
A subset 
$M$ 
of 
$E(G)$
is called a 
{\bf perfect matching}(or a {\bf dimer covering}) on 
$G$ 
if and only if 
for any
$x \in V(G)$
there exists 
$e \in M$
uniquely with 
$x \in e$.
Let 
${\cal M}(G)$ 
be the set of perfect matchings on 
$G$. 
The perfect matching problem 
was first studied by  
Kasteleyn, Temperley-Fisher, 
in the context of the statistical mechanics, and many papers appeared since then. 
In particular, 
for the perfect matching on the bipartite graphs, such as domino or lozenge tilings, many results have been known
(e.g., \cite{K} and references therein).

In this paper 
we consider the perfect matching on a non-biparite graph
$G^{(k)}$ 
defined below. 
We use the same notation in 
\cite{NS2}.
Let 
$G_2$
be a subgraph of the square lattice, and let 
$G_1$
be its dual graph(Figure \ref{G2}). 
%

\begin{figure}
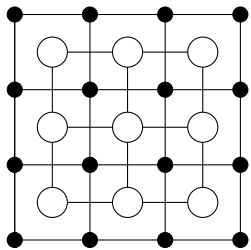

\begin{center}
\bet

\draw (0,0) --++(0,3)  ; 

\foreach \x in {0, 1, 2, 3}
\draw (\x, 0)  --++(0, 3); 

\foreach \y in {0, 1, 2, 3}
\draw (0, \y) --++(3, 0) ; 

\foreach \x in {0, 1, 2, 3}
 \foreach \y in {0, 1, 2, 3}
  {
   \draw (\x, \y) [ fill = black ] circle ( 0.1 cm ) ; 
   }

\begin{scope} [ xshift = 0.5 cm, yshift = 0.5 cm ] 
{

\draw (0, 0) grid +(2, 2) ; 

\foreach \x in {0, 1, 2}
\foreach \y in {0, 1, 2}
{
\draw [ fill = white ]  (\x, \y) circle (0.2 cm ) ; 
}

}
\end{scope}

\ent

\end{center}
\caption{
$G_2$
and 
$G_1$
}
\label{G2}
\end{figure}
Let 
$G_{1, T}$
be the graph made by adding 
$(2k-1)$
vertices
$T_1, T_2, \cdots, T_{2k-1}$,
which we call the {\bf terminal}, 
to the boundary 
$\partial G_1$ 
of 
$G_1$
(Figure \ref{G1}).
%

\begin{figure}
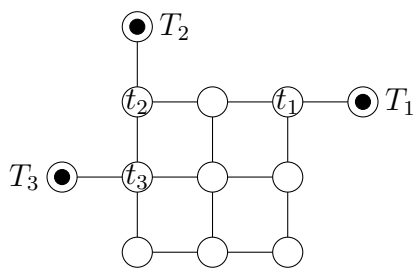

\begin{center}
\bet

\draw (0,0) grid +(2, 2) ; 

\draw (0, 2) --+(0, 1) ; 
\draw (0, 1) --+(-1, 0) ; 
\draw (2, 2) --+(1, 0) ; 

\foreach \x in {0, 1, 2}
\foreach \y in {0, 1, 2}
{
\draw (\x, \y) [ fill = white ] circle (0.2 cm ) ; 
}

\draw (2, 2) node {$t_1$} ; 
\draw (0, 2) node {$t_2$} ; 
\draw (0, 1) node {$t_3$} ; 

\draw (0, 3) [ fill = white ]  circle ( 0.2 cm ) ; 
\draw (0, 3) [ fill = black ] circle ( 0.1 cm ) ; 

\draw (-1, 1) [ fill = white ]  circle ( 0.2 cm ) ; 
\draw (-1, 1) [ fill = black ] circle ( 0.1 cm ) ; 

\draw (3, 2) [ fill = white ]  circle ( 0.2 cm ) ; 
\draw (3, 2) [ fill = black ] circle ( 0.1 cm ) ; 

\draw (3.5, 2) node {$T_1$}; 
\draw (0.5, 3) node {$T_2$}; 
\draw (-1.5, 1) node {$T_3$}; 

\ent
\end{center}
\caption{
An example of 
$G_{1, T}$
for
$k=2$. 
}
\label{G1}
\end{figure}
We next superimpose 
$G_2$
and 
$G_{1, T}$, 
and we make a vertex wherever two edges cross, and call it a {\bf middle vertex}.
For the vertices 
in the boundary of 
$G_1$, 
we add extra edges toward the outer face of 
$G_2$, 
and make vertices, which we call the {\bf boundary vertex}, wherever two edges meet(Figure \ref{Superimpose}). 
%

\begin{figure}
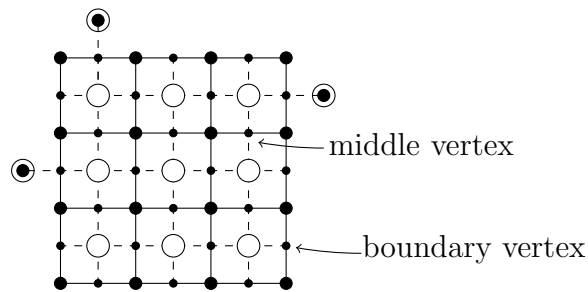

\begin{center}
\bet 

\draw (0, 0) grid +(3, 3) ; 

\begin{scope} [ xshift = 0.5cm, yshift = 0.5 cm ] 
{

\draw [ dashed  ] (0, 0) grid +(2, 2) ; 

}
\end{scope}

\foreach \x in {0, 1, 2, 3}
\foreach \y in {0, 1, 2, 3}
{
\draw (\x, \y) [ fill = black ] circle ( 0.08 cm ) ; 
}

\begin{scope} [ xshift = 0.5 cm, yshift = 0.5 cm ]
{
\draw (0, 3) [ fill = white ] circle ( 0.15 cm ) ; 
\draw (0, 3) [ fill = black ] circle ( 0.08 cm ) ; 

\draw (-1, 1)  [ fill = white ] circle ( 0.15 cm ) ; 
\draw (-1, 1) [ fill = black ] circle ( 0.08 cm ) ; 

\draw (3, 2) [ fill = white ]  circle ( 0.15 cm ) ; 
\draw (3, 2) [ fill = black ] circle ( 0.08 cm ) ; 

\draw [ dashed ]  (0, 2) --+(0, 1) ; 
\draw [ dashed ]  (0, 1) --+(-1, 0) ; 
\draw [ dashed ]  (2, 2) --+(1, 0) ; 

\draw [ dashed ] (0, 0) --+(-0.5, 0) ; 
\draw [ dashed ] (0, 2) --+(-0.5, 0) ; 
\draw [ dashed ] (2, 0) --+(0.5, 0) ; 
\draw [ dashed ] (2, 1) --+(0.5, 0) ; 

\foreach \x in {0, 1, 2}
{
\draw [ dashed ] (\x, 0) --+(0, -0.5) ; 
\draw [ dashed ] (\x, 2) --+(0, 0.5) ; 
}

}
\end{scope}

\foreach \x in {0, 0.5, 1, 1.5, 2, 2.5,  3}
\foreach \y in {0, 0.5, 1, 1.5, 2, 2.5,  3}
{
\draw (\x, \y) [ fill = black ] circle (0.05 cm) ; 
}

\foreach \x in {0, 1, 2}
\foreach \y in {0, 1, 2}
{
\draw ( \x+0.5, \y+0.5 ) [ fill = white ] circle ( 0.15 cm ) ; 
}

\draw [ -> ] (3.5, 1.8) arc (270 : 260 : 5) ; 
\draw (4.8, 1.85) node {middle vertex} ; 

\draw [ -> ] (4, 0.4) arc (270 : 260 : 5) ; 
\draw (5.5, 0.45) node {boundary vertex} ; 

\ent
\end{center}
\caption{
We superimpose  $G_2$ and $G_{1, T}$ 
and then add middle and boundary vertices
}
\label{Superimpose}
\end{figure}
By adding 
furthermore diagonal edges alternately, we have the graph
$G^{(k)}$(Figure \ref{OurGraph}). 
%

\begin{figure}
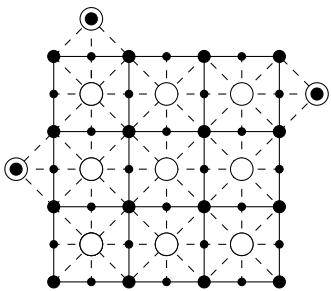

\begin{center}
\bet 

\draw (0, 0) grid +(3, 3) ; 

\begin{scope} [ xshift = 0.5cm, yshift = 0.5 cm ] 
{
\draw [ dashed  ] (0, 0) grid +(2, 2) ; 
}
\end{scope}

\foreach \x in {0, 1, 2, 3}
\foreach \y in {0, 1, 2, 3}
{
\draw (\x, \y) [ fill = black ] circle ( 0.08 cm ) ; 
}

\begin{scope} [ xshift = 0.5 cm, yshift = 0.5 cm]
{

\draw [ dashed ]  (0, 2) --+(0, 1) ; 
\draw [ dashed ]  (0, 1) --+(-1, 0) ; 
\draw [ dashed ]  (2, 2) --+(1, 0) ; 

\draw [ dashed ] (0, 0) --+(-0.5, 0) ; 
\draw [ dashed ] (0, 2) --+(-0.5, 0) ; 
\draw [ dashed ] (2, 0) --+(0.5, 0) ; 
\draw [ dashed ] (2, 1) --+(0.5, 0) ; 

\foreach \x in {0, 1, 2}
{
\draw [ dashed ] (\x, 0) --+(0, -0.5) ; 
\draw [ dashed ] (\x, 2) --+(0, 0.5) ; 
}

}
\end{scope}

\foreach \x in {0, 0.5, 1, 1.5, 2, 2.5,  3}
\foreach \y in {0, 0.5, 1, 1.5, 2, 2.5,  3}
{
\draw (\x, \y) [ fill = black ] circle (0.05 cm) ; 
}

\foreach \y in {0, 1, 2} 
{
\draw [ dashed ] (0, \y)  --++(0.5, 0.5) --++(0.5, -0.5)
 --++(0.5, 0.5) --++(0.5, -0.5) --++(0.5, 0.5) --++(0.5, -0.5) ; 
\draw [ dashed ] (0, \y+1) --++(0.5, -0.5) --++(0.5, 0.5)
--++(0.5, -0.5) --++(0.5, 0.5) --++(0.5, -0.5) --++(0.5, 0.5) ;  
}

\draw [ dashed ] (0, 1) --++(-0.5, 0.5) --++(0.5, 0.5) ; 
\draw [ dashed ] (0, 3) --++(0.5, 0.5) --++(0.5, -0.5) ; 
\draw [ dashed ] (3, 2) --++(0.5, 0.5) --++(-0.5, 0.5) ; 

\foreach \x in {0, 1, 2, }
\foreach \y in {0, 1, 2, }
{
\draw (\x+0.5, \y+0.5) [ fill = white ] circle ( 0.15 cm ) ; 
}

\begin{scope} [ xshift = 0.5 cm, yshift = 0.5 cm ] 
{
\draw (0, 3) [ fill = white ] circle ( 0.15 cm ) ; 
\draw (0, 3) [ fill = black ] circle ( 0.08 cm ) ; 

\draw (-1, 1) [ fill = white ] circle ( 0.15 cm ) ; 
\draw (-1, 1) [ fill = black ] circle ( 0.08 cm ) ; 

\draw (3, 2) [ fill = white ] circle ( 0.15 cm ) ; 
\draw (3, 2) [ fill = black ] circle ( 0.08 cm ) ; 
}
\end{scope}

\ent
\end{center}
\caption{
Graph $G^{(2)}$
}
\label{OurGraph}
\end{figure}
Figure \ref{Dimer-Example}
shows an example of a perfect matching on 
$G^{(2)}$, 
where we have two kinds of edges. 
For 
$M \in {\cal M}(G)$, 
we say an edge 
$e \in M$ 
is an {\bf impurity} if 
it is an edge between the vertices of 
$G_{1,T}$ 
and 
$G_2$.
The number of impurities is constant on 
${\cal M}(G^{(k)})$ 
and is equal to 
$k$. 
%

\begin{figure}
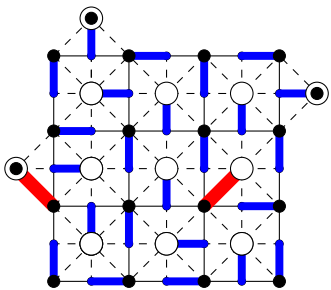

\begin{center}
\bet 

\draw (0, 0) grid +(3, 3) ; 

\begin{scope} [ xshift = 0.5cm, yshift = 0.5 cm ] 
{
\draw [ dashed  ] (0, 0) grid +(2, 2) ; 
}
\end{scope}

\begin{scope} [ xshift = 0.5 cm, yshift = 0.5 cm]
{

\draw [ dashed ]  (0, 2) --+(0, 1) ; 
\draw [ dashed ]  (0, 1) --+(-1, 0) ; 
\draw [ dashed ]  (2, 2) --+(1, 0) ; 

\draw [ dashed ] (0, 0) --+(-0.5, 0) ; 
\draw [ dashed ] (0, 2) --+(-0.5, 0) ; 
\draw [ dashed ] (2, 0) --+(0.5, 0) ; 
\draw [ dashed ] (2, 1) --+(0.5, 0) ; 

\foreach \x in {0, 1, 2}
{
\draw [ dashed ] (\x, 0) --+(0, -0.5) ; 
\draw [ dashed ] (\x, 2) --+(0, 0.5) ; 
}

}
\end{scope}

\foreach \x in {0, 0.5, 1, 1.5, 2, 2.5,  3}
\foreach \y in {0, 0.5, 1, 1.5, 2, 2.5,  3}
{
\draw (\x, \y) [ fill = black ] circle (0.05 cm) ; 
}

\foreach \y in {0, 1, 2} 
{
\draw [ dashed ] (0, \y)  --++(0.5, 0.5) --++(0.5, -0.5)
 --++(0.5, 0.5) --++(0.5, -0.5) --++(0.5, 0.5) --++(0.5, -0.5) ; 
\draw [ dashed ] (0, \y+1) --++(0.5, -0.5) --++(0.5, 0.5)
--++(0.5, -0.5) --++(0.5, 0.5) --++(0.5, -0.5) --++(0.5, 0.5) ;  
}

\draw [ dashed ] (0, 1) --++(-0.5, 0.5) --++(0.5, 0.5) ; 
\draw [ dashed ] (0, 3) --++(0.5, 0.5) --++(0.5, -0.5) ; 
\draw [ dashed ] (3, 2) --++(0.5, 0.5) --++(-0.5, 0.5) ; 

\draw [ line cap = round, line width = 3, color = blue ] (0,0) --+(0, 0.5) ; 
\draw [ line cap = round, line width = 3, color = blue ] (0.5,0) --+(0.5, 0) ; 
\draw [ line cap = round, line width = 3, color = blue ] (1.5,0) --+(0.5, 0) ; 
\draw [ line cap = round, line width = 3, color = blue ] (2.5,0) --+(0, 0.5) ; 
\draw [ line cap = round, line width = 3, color = blue ] (3,0) --+(0, 0.5) ; 
\draw [ line cap = round, line width = 3, color = blue ] (0.5,0.5) --+(0, 0.5) ; 
\draw [ line cap = round, line width = 3, color = blue ] (1,0.5) --+(0, 0.5) ; 
\draw [ line cap = round, line width = 3, color = blue ] (1.5,0.5) --+(0.5, 0) ; 
\draw [ line cap = round, line width = 3, color = blue ] (1.5,1) --+(0, 0.5) ; 
\draw [ line cap = round, line width = 3, color = blue ] (2.5,1) --+(0.5, 0) ; 
\draw [ line cap = round, line width = 3, color = blue ] (0,1.5) --+(0.5, 0) ; 
\draw [ line cap = round, line width = 3, color = blue ] (1,1.5) --+(0, 0.5) ; 
\draw [ line cap = round, line width = 3, color = blue ] (2, 1.5) --+(0, 0.5) ; 
\draw [ line cap = round, line width = 3, color = blue ] (3, 1.5) --+(0, 0.5) ; 
\draw [ line cap = round, line width = 3, color = blue ] (0,2) --+(0.5, 0) ; 
\draw [ line cap = round, line width = 3, color = blue ] (1.5,2) --+(0, 0.5) ; 
\draw [ line cap = round, line width = 3, color = blue ] (2.5,2) --+(0, 0.5) ; 
\draw [ line cap = round, line width = 3, color = blue ] (0, 2.5) --+(0, 0.5) ; 
\draw [ line cap = round, line width = 3, color = blue ] (0.5, 2.5) --+(0.5, 0) ; 
\draw [ line cap = round, line width = 3, color = blue ] (2, 2.5) --+(0, 0.5) ; 
\draw [ line cap = round, line width = 3, color = blue ] (3, 2.5) --+(0.5, 0) ; 
\draw [ line cap = round, line width = 3, color = blue ] (0.5, 3) --+(0, 0.5) ; 
\draw [ line cap = round, line width = 3, color = blue ] (1, 3) --+(0.5, 0) ; 
\draw [ line cap = round, line width = 3, color = blue ] (2.5, 3) --+(0.5, 0) ; 
%
\draw [ line width = 5, color = red ] (0,1) -- +(-0.5, 0.5);
\draw [ line width = 5, color = red ] (2,1) -- +(0.5, 0.5);

\foreach \x in {0, 1, 2, 3}
\foreach \y in {0, 1, 2, 3}
{
\draw (\x, \y) [ fill = black ] circle ( 0.08 cm ) ; 
}

\foreach \x in {0, 1, 2, }
\foreach \y in {0, 1, 2, }
{
\draw (\x+0.5, \y+0.5) [ fill = white ] circle ( 0.15 cm ) ; 
}

\begin{scope} [ xshift = 0.5 cm, yshift = 0.5 cm ]
{
\draw (0, 3) [ fill = white ] circle ( 0.15 cm ) ; 
\draw (0, 3) [ fill = black ] circle ( 0.08 cm ) ; 

\draw (-1, 1) [ fill = white ] circle ( 0.15 cm ) ; 
\draw (-1, 1) [ fill = black ] circle ( 0.08 cm ) ; 

\draw (3, 2) [ fill = white ] circle ( 0.15 cm ) ; 
\draw (3, 2) [ fill = black ] circle ( 0.08 cm ) ; 
}
\end{scope}

\ent
\end{center}
\caption{
A Dimer Covering on  $G^{(2)}$
}
\label{Dimer-Example}
\end{figure}

We shall review 
the known results on the perfect matching problem on 
$G^{(k)}$. 
In 
\cite{NOS}, 
they consider two types of elementary moves 
which transforms a perfect matching on $G^{(k)}$ to a different one, and showed that any two perfect matchings are connected via a sequence of elementary moves. 
Then 
we can construct a Markov chain on 
${\cal M}(G^{(k)})$ 
with the uniform stationary distribution. 
By MCMC simulation, 
we conjecture that all impurities tend to distribute near the terminals,  more precisely,  
the number of perfect matchings 
is maximized when all the impurities are on the terminals.
Ciucu \cite{Ciucu1, Ciucu2}
studied the dimer-monomer problem mainly on the hexagonal lattice and found that monomers interact as if they were the charged particles in 2-dimensional electrostatics. 
Though 
his setting of the problem is different from ours, his works should have something to do with our conjecture. 
In 
\cite{NS1}, 
they studied the one impurity case, and derived a formula for computing the number of perfect matchings if the impurity is arbitrary fixed. 
From that formula, 
the conjecture above is generically correct for 
$k=1$, 
in the sense that 
the probability of finding the impurity on the fixed site 
(under the uniform distribution on 
${\cal M}(G^{(1)})$)
decays exponentially away from the terminal. 

In this paper, 
we study the 
$k \ge 2$
case and derive a formula to compute the number of perfect matchings if the impurities are fixed on the boundary(Theorem
\ref{two impurities}). 
Moreover, 
we study the behavior of the probability of finding the impurity when the size of the graph tends to infinity. 
\\

{\bf Notation}\\
We collect 
the notations frequently used in this paper. \\

(1)
$\triangle_G$
is the Laplacian on a weighted graph
$G$ : 
\begin{equation}
(\triangle_G f)(v)
=
\sum_{w \in V(G)} c_{vw}(f(v) - f(w))
\label{Laplacian}
\end{equation}
where 
$c_{vw}$
is the weight on the edge 
$(v,w) \in E(G)$
which we take 
$c_{vw} = 1$
in this paper. 

$K$
is the Laplacian on 
$G_{1,T}$
restricted on 
$G_1$ : 
\begin{equation}
K :=\triangle_{G_{1,T}} |_{G_1}
=
1_{G_1} 
\triangle_{G_{1,T}} 
1_{G_1}.
\label{K}
\end{equation}
$1_S$
is the characteristic function on a set 
$S$. 

$A(x,y)$ 
or
$A_{x,y}$
is the matrix element of a matrix 
$A$. \\

(2)
$I^{(1)}, I^{(2)}, \cdots, I^{(k)} \in E(G^{(k)})$
are the location of impurities, where 
$I^{(j)} = (I^{(j)}_1, I^{(j)}_2)$, 
$I^{(j)}_i \in V(G_i)$, 
$j=1, 2, \cdots, k$, 
$i=1,2$.
For the one impurity case($k=1$), 
we simply write 
$I = (I_1, I_2) \in E(G^{(1)})$, 
$I_i \in V(G_j)$, 
$i= 1, 2$.

$t_1, t_2, \cdots, t_{2k-1} \in V(G_1)$
are vertices of 
$G_1$
connected to the terminals
$T_1, T_2, \cdots, T_{2k-1}$ 
respectively(Figure \ref{G1}). 
For one impurity case, 
we simply write
$t$(Figure \ref{One-Impurity}). \\
%

\begin{figure}
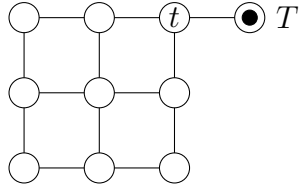

\begin{center}
\bet

\draw (0, 0) grid (2,2) ; 

\draw (2, 2) --+(1, 0); 

\draw (3, 2) [ fill = white ] circle (0.2 cm ) ; 
\draw (3,2) [ fill = black ] circle (0.1 cm) ; 

\foreach \x in {0, 1, 2}
\foreach \y in {0, 1, 2}
{
\draw (\x, \y) [ fill = white ] circle (0.2 cm) ; 
}

\draw (3.5, 2) node {$T$} ; 

\draw (2, 2) node {$t$} ; 
\ent
\end{center}
\caption{
$G_{1, T}$ 
for one impurity case
}
\label{One-Impurity}
\end{figure}

In the following subsections, 
we summarize the results obtained in this paper. 
%
\subsection{One impurity case}
We first recall the result in \cite{NS1} where we set 
$k=1$ 
and counted the number of matchings when the location of the impurity is given. 
\begin{theorem}
\label{one impurity}
Let 
$x \in G_1$. 
Then the number 
$M(x)$ 
of perfect matchings on 
$G^{(1)}$
whose impurity satisfies 
$I_1 = x$
is equal to 
\[
M (x) = 
| (K^{-1})(x, t)
\det K |.
\]
\end{theorem}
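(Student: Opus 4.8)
The plan is to convert the dimer count on $G^{(1)}$ into a spanning-tree count on $G_{1,T}$ by a Temperley-type bijection, and then to read off both the total and the constrained counts from the matrix-tree theorem applied to $K$. The graph $G^{(1)}$ is precisely the ``Temperleyan'' graph obtained by superimposing $G_1$ with its dual $G_2$ and inserting a vertex at each crossing, so the classical correspondence of Temperley (in the form used by Kenyon, Propp and Wilson) should apply: perfect matchings are in bijection with spanning trees (equivalently, rooted spanning forests) of $G_{1,T}$, and the number of such trees is $\det K$ by the matrix-tree theorem, since $K = 1_{G_1}\triangle_{G_{1,T}}1_{G_1}$ is exactly the Laplacian of $G_{1,T}$ with the terminal row and column removed.

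First I would make the bijection $\Phi$ precise for this particular graph, keeping careful track of the impurity. The essential point is that the single impurity edge --- the unique edge of $M \in {\cal M}(G^{(1)})$ joining a vertex of $G_{1,T}$ to a vertex of $G_2$ --- records how the associated spanning tree attaches near the terminal vertex $t$. Concretely, I expect that fixing $I_1 = x$ translates into the condition that, in the image tree, the branch through $x$ is the one routed to $t$; equivalently, that $x$ and $t$ lie in the prescribed component of the spanning forest obtained by cutting the tree at the root. Verifying this translation --- that ``impurity at $x$'' is equivalent to a single clean connectivity condition relating $x$ to $t$ --- is the combinatorial heart of the argument.

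With the translation in hand, the count becomes linear algebra. The constrained configurations are enumerated by the first minor $M_{t,x}$ of $K$, the determinant of the matrix obtained by deleting the row indexed by $t$ and the column indexed by $x$; by the all-minors (Chaiken) matrix-tree theorem this minor is, up to the explicit sign $(-1)^{t+x}$, the generating function of exactly the spanning forests picked out in the previous step, so that $M(x) = |M_{t,x}|$. On the other hand, Cramer's rule gives $(-1)^{t+x} M_{t,x} = (K^{-1})(x,t)\,\det K$, whence $M(x) = |M_{t,x}| = |(K^{-1})(x,t)\,\det K|$, the count being manifestly nonnegative so that the absolute value absorbs the orientation sign.

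The step I expect to be the main obstacle is the first one: pinning down the Temperley bijection for $G^{(1)}$ with its added diagonal edges and boundary vertices, and proving that the location $I_1$ of the impurity corresponds \emph{exactly} to the forest-connectivity datum that the minor of $K$ enumerates. Once that dictionary is established, the matrix-tree theorem and Cramer's rule deliver the stated formula, with only the sign bookkeeping left to check.
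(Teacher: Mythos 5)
Your outline is correct, but it takes a genuinely different route from the proof given in this paper. Here (Section 3.2) Theorem \ref{one impurity} is proved by building a circular planar graph $G_C$ with three nodes $\{T,X,Y\}$ (a pendant node $X$ attached at $x$, all boundary vertices identified into $Y$), applying the Kenyon--Wilson grove formula (Theorem \ref{bipartite}) to obtain $M(x)=|(K_x^{-1})(x,t)\det K_x|$ for a modified matrix $K_x$ whose $(x,x)$ entry is increased by $1$, and then eliminating the $x$-dependence of $K_x$ via a resolvent identity. What you propose is instead the $k=1$ specialization of the hitting-matrix argument of Section 2: invoke the extended Temperley bijection (Theorem \ref{bijection}, quoted from \cite{NS2}) and then count the constrained spanning trees by linear algebra. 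Your route is more elementary (no groves, no resolvent computation), at the price of the combinatorial dictionary you rightly identify as the crux --- which is precisely Theorem \ref{bijection}, so it is available. Two points need tightening before your argument closes. First, the relevant graph is $G_{1,R}$ (terminal identified with the root $R$, boundary vertices joined to $R$ so every vertex has degree $4$), not $G_{1,T}$ alone: $K=4(I-Q)$ is the Laplacian of $G_{1,R}$ with the root row and column deleted, so $\det K$ counts spanning trees of $G_{1,R}$. Second, ``impurity at $x$'' is not merely the condition that $x$ and $t$ lie in the same component after cutting the tree at the root; it requires that the unique tree edge attaching that component to $R$ be the terminal edge $(t,R)$, since $t$ may also be joined to $R$ by ordinary boundary edges. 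The clean formulation is that matchings with $I_1=x$ correspond bijectively (by deleting or inserting the terminal edge) to two-component spanning forests of $G_{1,R}$ with $t,x$ in one component and $R$ in the other, and it is this forest count that the all-minors theorem identifies, up to sign, with $(K^{-1})(x,t)\det K$. With those corrections your proof is sound.
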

If we specify 
$I_1 = x \in G_1$, 
we have four possibilities of putting 
$I_2 \in G_2$,
but the number of perfect matching is independent of the choice of that. 
In Section 3.1, 
we give an alternative proof of 
Theorem \ref{one impurity} 
by using the theory developped by Kenyon-Wilson \cite{KW1, KW2}. 
%
\subsection{Two impurities case}
Set 
$k=2$
and let 
$a, b \in G_1$
be vertices on the boundary 
$\partial G_1$
of 
$G_1$
such that there are no terminals in between
(Figure \ref{G2-Fixed-Impurities}). 
We put 
the impurities such that their location  
$I^{(1)}, I^{(2)}$
satisfies  
$I^{(1)}_1=a$, $I^{(2)}_1=b$
and the other ends 
$I^{(1)}_2$, $I^{(2)}_2$ 
lie on the boundary 
$\partial G_2$ 
of 
$G_2$.
We remark that, 
unlike the one impurity case, 
the number of perfect matchings becomes different when we put 
$I^{(1)}, I^{(2)} \notin \partial G_2$
(Figure \ref{Non-Allowed-Impurities}).
Let 
$\partial C$
be the set of vertices on the boundary 
$\partial G_1$
of 
$G_1$
which lie between 
$a$
and 
$b$
(Figure \ref{Two-Impurities}).
%

\begin{figure}
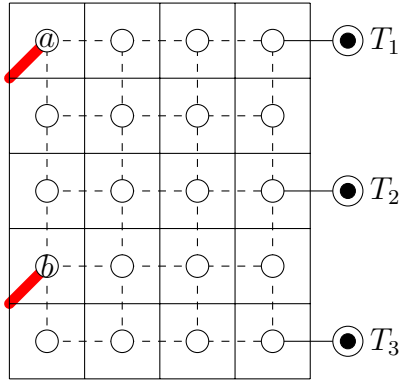

\begin{center}
\bet

\draw [ line cap = round, color = red, line width = 4 ]  (0.5, 4.5) --+(-0.5, -0.5) ; 
\draw [ line cap = round, color = red, line width = 4 ]  (0.5, 1.5) --+(-0.5, -0.5) ; 

\draw (0,0) grid +(4, 5) ; 

\begin{scope} [ xshift = 0.5 cm, yshift = 0.5 cm ] 
{

\draw (0,0) [ dashed ] grid +(3, 4) ; 

\foreach \y in {0, 2, 4}
{
\draw (3, \y) --+(1, 0) ; 
}

\draw (4, 0) [ fill = white ] circle ( 0.2 cm ) ; 
\draw (4, 0) [ fill = black ] circle (0.1 cm ) ; 

\draw (4, 2) [ fill = white ]  circle ( 0.2 cm ) ; 
\draw (4, 2) [ fill = black ] circle (0.1 cm ) ; 

\draw (4, 4) [ fill = white ]  circle ( 0.2 cm ) ; 
\draw (4, 4) [ fill = black ] circle (0.1 cm ) ;

\foreach \x in {0, 1, 2,3}
\foreach \y in {0, 1, 2, 3, 4}
{
\draw (\x, \y) [ fill = white ] circle ( 0.15 cm ) ; 
}

\draw (4.5, 4) node {$T_1$} ; 
\draw (4.5, 2) node {$T_2$} ; 
\draw (4.5, 0) node {$T_3$} ; 

\draw (0, 4) node {$a$} ; 
\draw (0, 1) node {$b$} ;

}
\end{scope}

\ent
\end{center}
\caption{
$G^{(2)}$ 
with fixed impurities
}
\label{G2-Fixed-Impurities}
\end{figure}
%

\begin{figure}
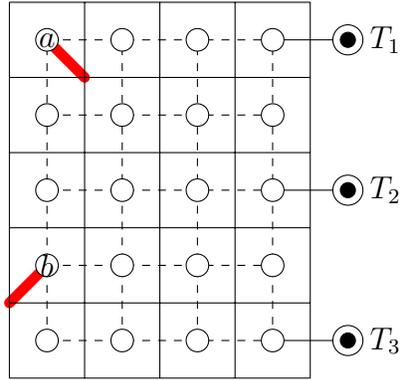

\begin{center}
\bet

\draw [ line cap = round, color = red, line width = 4 ]  (0.5, 4.5) --+(0.5, -0.5) ; 
\draw [ line cap = round, color = red, line width = 4 ]  (0.5, 1.5) --+(-0.5, -0.5) ; 
\draw (0,0) grid +(4, 5) ; 

\begin{scope} [ xshift = 0.5 cm, yshift = 0.5 cm ] 
{

\draw (0,0) [ dashed ] grid +(3, 4) ; 

\foreach \y in {0, 2, 4}
{
\draw (3, \y) --+(1, 0) ; 
}

\draw (4, 0) [ fill = white ] circle ( 0.2 cm ) ; 
\draw (4, 0) [ fill = black ] circle (0.1 cm ) ; 

\draw (4, 2)  [ fill = white ] circle ( 0.2 cm ) ; 
\draw (4, 2) [ fill = black ] circle (0.1 cm ) ; 

\draw (4, 4)  [ fill = white ] circle ( 0.2 cm ) ; 
\draw (4, 4) [ fill = black ] circle (0.1 cm ) ;

\foreach \x in {0, 1, 2, 3}
\foreach \y in {0, 1, 2, 3, 4}
{
\draw (\x, \y) [ fill = white ] circle ( 0.15 cm ) ; 
}

\draw (4.5, 4) node {$T_1$} ; 
\draw (4.5, 2) node {$T_2$} ; 
\draw (4.5, 0) node {$T_3$} ; 

\draw (0, 4) node {$a$} ; 
\draw (0, 1) node {$b$} ;

}
\end{scope}

\ent
\end{center}
\caption{
This impurity configuration is not allowed for Theorem 1.2.
}
\label{Non-Allowed-Impurities}
\end{figure}
%

\begin{figure}
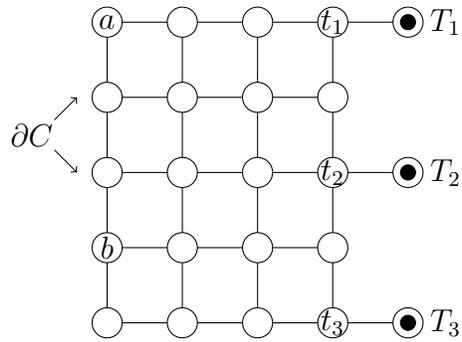

\begin{center}
\bet

\draw (0,0) grid +(3, 4) ; 

\foreach \y in {0, 2, 4}
{
\draw (3, \y) --+(1, 0) ; 
}

\draw (4, 0) [ fill = white ] circle ( 0.2 cm ) ; 
\draw (4, 0) [ fill = black ] circle (0.1 cm ) ; 

\draw (4, 2)  [ fill = white ] circle ( 0.2 cm ) ; 
\draw (4, 2) [ fill = black ] circle (0.1 cm ) ; 

\draw (4, 4)  [ fill = white ] circle ( 0.2 cm ) ; 
\draw (4, 4) [ fill = black ] circle (0.1 cm ) ;

\foreach \x in {0, 1, 2, 3}
\foreach \y in {0, 1, 2, 3, 4}
{
\draw (\x, \y) [ fill = white ] circle ( 0.2 cm ) ; 
}

\draw (4.5, 4) node {$T_1$} ; 
\draw (4.5, 2) node {$T_2$} ; 
\draw (4.5, 0) node {$T_3$} ; 

\draw (3, 4) node {$t_1$} ; 
\draw (3, 2) node {$t_2$} ; 
\draw (3, 0) node {$t_3$} ; 

\draw (0, 4) node {$a$} ; 
\draw (0, 1) node {$b$} ; 

\draw [ <- ] (-0.4, 2) --+(-0.3, 0.3) ; 
\draw [ <- ] (-0.4, 3) --+(-0.3, -0.3) ;  

\draw (-1, 2.5) node {$\partial C$} ;

\ent
\end{center}
\caption{
$G_{1, T}$ 
for two impurities
}
\label{Two-Impurities}
\end{figure}
%


%
\begin{theorem}
\label{two impurities}
Suppose 
the two impurities satisfy 
$I^{(1)}_1 = a$, 
$I^{(2)}_1 = b$
and 
$I^{(j)}_2 \in \partial G_2$, $j=1,2$.
Then the number 
$M(a,b)$
of the corresponding perfect matchings is given by 
\[
M(a,b)
=
A(a,b)
:= 
\left|
\det 
\left(
\begin{array}{ccc}
L_{a, t_1} & L_{a, t_2} & L_{a, t_3} \\
L_{\partial C, t_1} & L_{\partial C, t_2} & L_{\partial C, t_3} \\
L_{b, t_1} & L_{b, t_2} & L_{b, t_3} \\
\end{array}
\right)
\det (K)
\right|
\]
where
\beq
L_{x, t_i} &=& 
(K^{-1})(x, t_i),
\quad
x = a, b, 
\quad
i=1, 2, 3
\\
L_{\partial C, t_i} &=& 
\sum_{y \in \partial C} 
(K^{-1})(y, t_i),
\quad
i=1, 2, 3.
\eeq
\end{theorem}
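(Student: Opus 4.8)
The plan is to reduce the dimer count to a count of spanning forests on $G_{1,T}$ and then to evaluate that count by the hitting-matrix (loop-erased walk) formula of \cite{Fomin}. The first ingredient, which I expect to be available from the Temperley-type correspondence already exploited in \cite{NS1, NS2}, is a bijection between ${\cal M}(G^{(2)})$ and spanning forests of $G_{1,T}$ in which each of the three trees carries exactly one terminal $t_1, t_2, t_3$. Under such a correspondence an impurity edge, being an edge joining $G_{1,T}$ to $G_2$, records where the forest fails to be a single tree; fixing $I^{(1)}_1 = a$, $I^{(2)}_1 = b$ with the remaining ends on $\partial G_2$ should translate into the requirement that $a$ and $b$ be the two boundary vertices at which the forest attaches to the outer face. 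So the first step is to make this dictionary precise and to read off exactly which boundary connectivity (which non-crossing partition of $\{a,b,t_1,t_2,t_3\}$, together with the role of the arc $\partial C$) is forced by the impurity placement.

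Second, I would recast the forest count as a count of non-crossing loop-erased walks. Because $a$, $\partial C$, $b$, $t_1, t_2, t_3$ sit on $\partial G_1$ in this cyclic order and the graph is planar, the admissible forests should be precisely the families of three pairwise non-intersecting loop-erased walks from the source set $\{a, *, b\}$ (the middle source free to occupy any vertex of $\partial C$) to the sinks $\{t_1, t_2, t_3\}$. Here the source count $2k-1 = 3$ arises as the $k=2$ impurity vertices interleaved with the single terminal-free arc between them, matching the $2k-1$ terminals; the $k=1$ case degenerates to one source and one sink, consistent with Theorem \ref{one impurity}. Fomin's theorem then expresses the signed number of such non-intersecting families as the $3 \times 3$ determinant of the hitting matrix $(P(s \to t_j))$, planarity guaranteeing that only the non-crossing pairing $a \leftrightarrow t_1$, $\partial C \leftrightarrow t_2$, $b \leftrightarrow t_3$ survives and that the sign is constant. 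The free middle source is exactly what produces the summed row $L_{\partial C, t_i} = \sum_{y \in \partial C} (K^{-1})(y,t_i)$.

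Third comes the linear-algebra cleanup, for which I would prefer the route through Jacobi's complementary-minor identity over the probabilistic one, as it avoids stray degree factors. Since to be absorbed at $T_i$ the walk must first reach its unique pre-terminal neighbor $t_i$, each hitting probability is proportional to the Green's function $(K^{-1})(s, t_i)$, so the relevant matrix is $((K^{-1})(s,t_j))$ up to column scalings that cancel upon converting probabilities to forest counts. By multilinearity the summed middle row splits the determinant into a sum over $y \in \partial C$ of genuine $3\times 3$ minors of $K^{-1}$ on rows $\{a,y,b\}$ and columns $\{t_1,t_2,t_3\}$; Jacobi's identity converts each such minor of $K^{-1}$ into the complementary minor of $K$ divided by $\det K$, so multiplication by $\det K$ clears the denominator and, via the all-minors matrix-tree theorem, returns an honest count of spanning forests. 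Taking absolute values removes the orientation-dependent sign and should yield $M(a,b) = A(a,b)$.

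I expect the real difficulty to lie entirely in the first two steps: verifying that the constraint $I^{(j)}_2 \in \partial G_2$ forces precisely the $\{a, \partial C, b\} \to \{t_1, t_2, t_3\}$ source–sink pattern, and in particular justifying why the terminal-free arc $\partial C$ enters as a single source summed over its vertices rather than as fixed endpoints — this is also what distinguishes the present situation from the configuration of Figure \ref{Non-Allowed-Impurities}, where the endpoints lie off $\partial G_2$. The determinantal and Green's-function manipulations of the third step, together with the consistency check against Theorem \ref{one impurity} (where the determinant collapses to the single entry $(K^{-1})(x,t)$), should then be routine.
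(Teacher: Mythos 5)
Your proposal follows essentially the same route as the paper's Section 2.2 proof: reduce $M(a,b)$ via the bijection of Theorem \ref{bijection} to a count of spanning forests of $G_1$ with the connectivity pattern $a \leftrightarrow t_1$, $\partial C \leftrightarrow t_2$, $b \leftrightarrow t_3$, apply Fomin's hitting-matrix theorem for the non-intersecting loop-erased walks, and identify the hitting probabilities with the Green's-function entries $(K^{-1})(x,t_i)$. The only divergence is cosmetic: the paper produces the $\det(K)$ factor by normalizing with Wilson's algorithm and the matrix-tree theorem together with the identity $4(I-Q)=K$, whereas you propose Jacobi's complementary-minor identity and the all-minors matrix-tree theorem, which yields the same bookkeeping.
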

This determinantal expression implies that impurities are repulsive each other. 
We give two proofs of  
Theorem \ref{two impurities} : 
one is due to the hitting matrix method by Fomin\cite{Fomin} in Section 2, and the other one by Kenyon-Wilson \cite{KW1, KW2} in Section 3.2. 
If 
$I^{(j)}_2 \notin \partial G_2$
(as in Figure \ref{Non-Allowed-Impurities}), 
then we have another formula given in Section 3.4. 
Essentially the same formula also holds for 
$k \ge 3$ 
case to be shown in Section 3.5.  
However, 
if some impurities are not on the boundary 
(i.e., 
$I_1^{(j)} \notin \partial G_1$
and
$I_2^{(j)} \notin \partial G_2$), 
then our methods of proof do not apply and it would be difficult to compute the number of perfect matchings. 
On the other hand, if 
$G_1$
is the subgraph of the one-dimensional chain(Figure \ref{One-Dimensional-Chain}), 
we can compute the number of perfect matchings for arbitrary position of impurities(Section 3.6). 
%

\begin{figure}
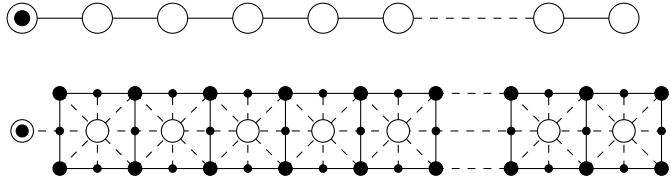

\begin{center}
\bet

\begin{scope} [ xshift = 0.5 cm, yshift = 2 cm ] 
{

\foreach \x in {-1, 0, 1, 2, 3} 
{
\draw (\x, 0) --+(1, 0) ; 
\draw [ fill = white ] (\x, 0) circle (0.2 cm ) ; 
}

\draw [ fill = black ] (-1, 0) circle (0.1 cm) ;  

\draw [ dashed ] (4, 0) --+(2, 0) ; 
\draw (6, 0) -- +(1, 0) ; 
\draw [ fill = white ] (4,0) circle (0.2 cm) ; 
\draw [ fill = white ] (6,0) circle (0.2 cm) ; 
\draw [ fill = white ] (7,0) circle (0.2 cm) ; 

}
\end{scope}

\draw (0, 0) grid +(5, 1) ; 
\draw (6, 0) grid +(2, 1) ; 
\draw [ dashed ] (5, 0) -- +(1, 0) ; 
\draw [ dashed ] (5, 1) -- +(1, 0) ;

\foreach \x in {0, 1, 2, 3, 4, 5, 6, 7}
\foreach \y in {0, 1}
{
\draw [ fill = black ] (\x, \y) circle (0.09cm) ; 
}

\foreach \y in {0, 1}
{
\draw [ fill = black ] (8, \y) circle (0.09cm) ; 
}

\foreach \x in {0, 1, 2, 3, 4, 5, 6, 7, 8}
{
\draw [ fill = black ] (\x, 0.5) circle (0.05cm) ; 
}

\foreach \x in {0, 1, 2, 3, 4, 6, 7}
\foreach \y in {0, 1}
{
\draw [ fill = black ] (\x + 0.5, \y) circle (0.05cm) ; 
}

\foreach \x in {0, 1, 2, 3, 4, 6, 7}
{
\draw [ dashed ] (\x, 0) --++(1, 1) ; 
\draw [ dashed ] (\x, 1) --++(1, -1) ; 
}


\begin{scope} [ xshift = 0.5 cm, yshift = 0.5 cm ] 
{

\draw [ dashed ] (-1, 0) --+(8.5, 0) ; 

\foreach \x in {0, 1, 2, 3, 4, 6, 7}
{
\draw [ dashed ] (\x, -0.5) --+(0, 1) ; 
}

\foreach \x in {-1, 0, 1, 2, 3} 
{
\draw [ fill = white ] (\x, 0) circle (0. 15 cm ) ; 
}
\draw [ fill = black ] (-1, 0) circle (0.08 cm) ;  

\draw [ fill = white ] (4,0) circle (0.15 cm) ; 
\draw [ fill = white ] (6,0) circle (0.15 cm) ; 
\draw [ fill = white ] (7,0) circle (0.15 cm) ;

}
\end{scope}

\ent
\end{center}
\caption{
The case where 
$G_1$
is an one-dimensional chain and the corresponding 
$G^{(1)}$. 
}
\label{One-Dimensional-Chain}
\end{figure}
%

%
\subsection{Large size limit}
In the one impurity case, 
we consider the limit of 
$M(x)$ 
as the size of 
$G_1$
tends to infinity, for the following two graphs.  
\[
G_1=
\cases{ 
G^{sq}_1(n) := \{ 
(x, y) \, | \, 
x = 1,2, \cdots, n, \; 
y = 1, 2, \cdots, n
\}
&
(2-dim) \cr
G^{ch}_1(n)
:= \{1, 2, \cdots, n \}
&
(1-dim) \cr
}
\]
We can also study 
the case where 
$G_1$
is the 
$n \times m$-rectangle,
provided 
$n$, $m$
grow proportionally.
We set 
the problem for the 2-dim case below, but the 1-dim case is formulated similarly. 
Connect the terminal 
$T$
to  
$r=(1,1) \in G_1^{sq}(n)$
(for 1-dim, 
$r=1 \in G_1^{ch}(n)$). 
We consider the following two problems. \\

(1)
Compute 
$\lim_{n \to \infty}{\bf P}\left(
I_1 = (x,y)
\right)$
for fixed
$(x,y) \in G^{sq}_1(n)$.
\\

(2)
We scale
$G_1^{sq}(n)$
by
$\frac 1n$
so that it is contained by the unit square, 
and consider the probability of finding the impurity on a small region in it. 
In other words, 
for any fixed 
$0 \le c_1 < c_2 \le 1$, 
$0 \le d_1 < d_2 \le 1$
we would like to compute 
\beq
\mu([c_1, c_2] \times[d_1, d_2])
:=
\lim_{n \to \infty}
{\bf P}\left(
I_1 \in 
[c_1 n, c_2 n] \times [d_1 n, d_2 n] 
\right)
\eeq
of a measure 
$\mu$
on 
$[0,1]^2$. 
\\
For the first problem, 
\begin{theorem}
\label{problem1}
(1)
(Example 3.6 in \cite{NS1})
For the one-dimensional chain
$G^{ch}_1(n)$, 
for fixed 
$j \in G^{ch}_1(n)$, 
we have
\[
{\bf P}\left(
I_1 =  j
\right)
=
\frac 14
\lambda_+^{-j}
(1 + o(1)), 
\quad
n \to \infty
\]
where
$\lambda_+ := 2 + \sqrt{3}$. 
\\
(2)
For the two-dimensional grid
$G^{sq}_1(n)$, 
for any fixed 
$(x,y) \in G^{sq}_1(n)$, 
we have
\[
\lim_{n \to \infty}
{\bf P}\left(
I_1 = (x,y)
\right)
=0.
\]
\end{theorem}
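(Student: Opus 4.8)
The common starting point for both parts is to turn Theorem \ref{one impurity} into a formula for the probability. Under the uniform measure on ${\cal M}(G^{(1)})$ each matching carries exactly one impurity, so $\sum_{x'} M(x')$ is the total number of matchings and
\[
{\bf P}\left( I_1 = x \right)
=
\frac{M(x)}{\sum_{x'} M(x')}
=
\frac{(K^{-1})(x,t)}{\sum_{x'} (K^{-1})(x',t)},
\]
the factor $\det K$ cancelling and every entry of $K^{-1}$ being nonnegative since $K$ is a symmetric M--matrix. Hence everything reduces to one column $x \mapsto (K^{-1})(x,t)$ of the inverse restricted Laplacian together with its total mass $S_n := \sum_{x'}(K^{-1})(x',t)$.

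For part (1) I would compute this column explicitly. For the chain $K$ is tridiagonal, so $\psi_j := (K^{-1})(j,t)$ solves $K\psi = e_t$, and away from the terminal $\psi$ obeys a constant--coefficient recurrence whose characteristic equation is $\lambda^2 - 4\lambda + 1 = 0$, with roots $\lambda_\pm = 2 \pm \sqrt 3$. Imposing the boundary condition at the far end selects the decaying mode, giving $\psi_j = c\,\lambda_+^{-j}(1+o(1))$ for fixed $j$ as $n \to \infty$, with $S_n$ converging to a finite geometric sum. Substituting into the ratio and carrying out the normalisation, tracking the boundary behaviour of $\psi$ near $j=t$, produces the constant $\tfrac14$ and recovers Example 3.6 of \cite{NS1}; since this part is essentially a recollection, the only work is the bookkeeping of these constants.

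The substance is part (2), where the same ratio is used but the mechanism is opposite. In two dimensions the only edge leaving $G_1^{sq}(n)$ within $G_{1,T}$ is the single terminal edge at $t$, so the row sums of $K$ vanish except at $t$, i.e. $K\mathbf 1 = e_t$. Two consequences follow at once: first $\mathbf 1 = K^{-1}e_t$, so the numerator $(K^{-1})(x,t)$ is identically $1$ and bounded in $n$; second, pairing $Ku=\mathbf 1$ with $\mathbf 1$ gives $\langle K\mathbf 1, u\rangle = \langle \mathbf 1,\mathbf 1\rangle = |V(G_1^{sq}(n))| = n^2$, whence $S_n = (K^{-1}\mathbf 1)_t = u_t = n^2$. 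Dividing the bounded numerator by $S_n = n^2$ forces ${\bf P}(I_1 = (x,y)) = 1/n^2 \to 0$, so the impurity delocalises uniformly over $\Theta(n^2)$ sites. (Should the precise construction place extra external edges on $\partial G_1^{sq}(n)$, the same conclusion survives: the numerator stays $O(1)$ by convergence of the finite--box Green's function to its infinite--volume limit, while $S_n \to \infty$ by the area estimate above.)

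The step I expect to demand the most care is precisely this contrast between the two regimes, because the naive reading of $K$ as a bare graph Laplacian would give a constant column and hence $1/n$ in one dimension as well. The point is that the thin chain carries a bulk mass, absent in the two--dimensional bulk, which is exactly what produces the root $\lambda_+ > 1$, makes $S_n$ converge, and lets a fixed site retain an $O(1)$ share; in two dimensions there is no such gap and $S_n$ diverges. Pinning down this structural difference from the explicit graphs, and in particular verifying the support of $K\mathbf 1$ in each case, is the crux; once these facts are in hand the vanishing of the fixed--site probability is immediate.
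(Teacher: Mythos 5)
Your reduction of both parts to the ratio $(K^{-1})(x,t)\big/\sum_{x'}(K^{-1})(x',t)$ is the same starting point as the paper (its eq.~(4.1), up to the harmless factor $4$ and the $+2$ in the normalisation), and your sketch of part (1) agrees with the cited computation: the recurrence $4\psi_j=\psi_{j-1}+\psi_{j+1}$ with roots $2\pm\sqrt3$ is correct precisely \emph{because} in this construction every vertex of $G_1$ has diagonal entry $4$ in $K$. Your part (2), however, contradicts that same point. The identity $4(I-Q)=K$ used in Section 2.2 and the eigenvalues $e_{kl}=4-2\cos\frac{k\pi}{n+1}-2\cos\frac{l\pi}{m+1}$ used in Proposition 4.1 show that $K=4I-A_{G_1}$ for the two--dimensional grid as well: every boundary vertex of $G_1^{sq}(n)$ carries extra edges toward the outer face (equivalently, to the root $R$), so the row sums of $K$ are strictly positive on all of $\partial G_1$, not only at $t$. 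Hence $K\mathbf{1}\ne e_t$, the numerator $(K^{-1})(x,t)$ is not identically $1$ (it converges to the nontrivial Green's-function value $A(x,y)$ of Proposition 4.1(1)), and $S_n$ is not $n^2$. Note also that your two parts use mutually inconsistent conventions: if $K$ really were the restricted Laplacian of ``grid plus one pendant terminal,'' the one--dimensional recurrence would read $2\psi_j=\psi_{j-1}+\psi_{j+1}$ and there would be no root $2+\sqrt3$.

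The fallback in your parenthesis has the right shape (bounded numerator, divergent denominator), but its justification collapses together with the main argument: once $K\mathbf{1}=\sum_{v\in\partial G_1}(4-\deg_{G_1}(v))\,e_v$, the pairing $\langle u,K\mathbf{1}\rangle=n^2$ with $u=K^{-1}\mathbf{1}$ only controls a weighted sum of $u$ over the $\Theta(n)$ boundary vertices (their average is $\Theta(n)$); it says nothing about the single corner value $u_t=S_n$, which in fact grows only like $\frac{2}{\pi}\log n$. Establishing that divergence is the actual content of the paper's Proposition 4.1(2), obtained from the sine eigenfunction expansion and a careful estimate of the resulting singular integral near the origin. (A softer substitute would be to note that $S_n=\frac14\,\mathbf{E}_{(1,1)}[\tau_n]$, the expected exit time of the killed walk from the box, which increases to the expected exit time from the quarter--plane, and that the latter is infinite; but some such argument must be supplied.) As written, part (2) of your proof has a genuine gap.
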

Theorem \ref{problem1}
has the following implications. 
(i)
for the one-dimensional chain, 
the probability of finding the impurity decays exponentially and it is localized near the terminal, 
(ii)
for the two-dimensional grid, 
the probability spread if the size of the grid is large. \\
The perfect matching on 
$G^{(1)}$
is determined by a spanning tree
$T$ 
on a graph
$G_{1, R}$
defined in Section 2.1(Theorem \ref{bijection}).
And the difference between the one- and two-dimensional cases comes from that of the expectation value of the length 
$l_T$ 
of 
$T$
(Proposition \ref{G}). 
In one-dimensional case, 
it is bounded with respect to 
$n$, 
while it diverges in the logarithmic order in the two-dimensional case(Proposition \ref{G}(2)). 
This observation 
together with the Chebyshev's inequality also solves the second problem : 
\begin{theorem}
\label{problem2}
In both cases, 
$\mu$
is equal to the delta measure on the origin. 
\end{theorem}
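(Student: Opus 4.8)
The plan is to read off the concentration of the rescaled impurity from the bound on $E[l_T]$ furnished by Proposition \ref{G}, using only an elementary first-moment (Chebyshev/Markov) estimate. The starting observation is geometric: under the bijection of Theorem \ref{bijection} between perfect matchings of $G^{(1)}$ and spanning trees $T$ of $G_{1,R}$, the impurity site $I_1$ and the root $r$ (the vertex carrying the terminal) are joined by a unique path in $T$, and $l_T$ is the length of that path. Since any path in a tree between two vertices is at least as long as their graph distance in $G_1$, we get the deterministic (pointwise in $T$) bound
\[
\mathrm{dist}_{G_1}(I_1, r) \le l_T ,
\]
so in particular $\|I_1 - r\|_1 \le l_T$ for the $L^1$ lattice distance. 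Even if one prefers a slightly different normalization of $l_T$, all that is needed is that $l_T$ dominate a fixed multiple of this distance.

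First I would convert this into a tail estimate. Applying Chebyshev's (Markov's) inequality to the nonnegative variable $l_T$, for every fixed $\varepsilon > 0$ and every $n$,
\[
{\bf P}\bigl( \mathrm{dist}_{G_1}(I_1, r) \ge \varepsilon n \bigr) \le {\bf P}\bigl( l_T \ge \varepsilon n \bigr) \le \frac{E[l_T]}{\varepsilon n}.
\]
By Proposition \ref{G} the numerator is $O(1)$ in the one-dimensional case and $O(\log n)$ in the two-dimensional case, so in both cases the right-hand side tends to $0$ as $n \to \infty$. Hence $\tfrac1n\,\mathrm{dist}_{G_1}(I_1, r) \to 0$ in probability, the only difference between the two geometries being the decay rate, $O(1/n)$ versus $O(\log n/n)$.

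Next I would transfer this to the scaled picture. Under the rescaling $G_1^{sq}(n)\mapsto \tfrac1n G_1^{sq}(n)\subset[0,1]^2$ the root $r=(1,1)$ is sent to $(1/n,1/n)\to(0,0)$, so the previous step says $\tfrac1n I_1 \to (0,0)$ in probability, while $\tfrac1n I_1$ always lies in $[0,1]^2$. Consequently, for any box $B=[c_1,c_2]\times[d_1,d_2]$ whose closure avoids the origin there is $\varepsilon>0$ with $B\subset\{\|z\|_1\ge\varepsilon\}$, whence $\mu(B)=\lim_n {\bf P}(\tfrac1n I_1\in B)=0$; and if the origin lies in the interior of $B$ then $\mu(B)\ge\lim_n {\bf P}(\|\tfrac1n I_1\|_1<\varepsilon)=1$. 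This is precisely the assertion $\mu=\delta_{(0,0)}$, and the one-dimensional statement is identical with $r=1$ giving $\mu=\delta_0$ on $[0,1]$.

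The two displayed inequalities are routine; the genuine inputs are Proposition \ref{G}, which controls $E[l_T]$, and the matching–tree correspondence of Theorem \ref{bijection}. I expect the only real point requiring care to be the identification $\mathrm{dist}_{G_1}(I_1,r)\le l_T$: one must verify that in the bijection the tree length $l_T$ indeed dominates the lattice distance from the impurity to the root, so that a bound on the \emph{expected} tree length genuinely constrains how far the impurity can stray. Once this link is secured, the delta-measure conclusion follows uniformly in the two cases, with the logarithmic divergence of $E[l_T]$ in the two-dimensional case still being negligible against the scaling factor $n$.
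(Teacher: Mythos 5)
Your proposal is correct and follows essentially the same route as the paper: both arguments rest on the pointwise bound that the tree path from the impurity to the root dominates the lattice distance (so $\mathrm{dist}(I_1,r)\le l_T$), followed by Markov's/Chebyshev's inequality and the bounds ${\bf E}[l_T]=O(1)$ in one dimension and $O(\log n)$ in two dimensions from Theorem \ref{problem1}(1) and Proposition \ref{G}(2). Your explicit final step identifying the limit measure with $\delta_0$ from convergence in probability is a harmless elaboration of what the paper leaves implicit.
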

In \cite{CK}, 
they studied the lozenge tiling with a gap and showed that the correlation function behaves like 
$\frac 1r$, 
where 
$r$
is the distance between the gap and the boundary. 
In our case, 
the impurity tends to be attracted to the terminal 
$T$
and not to the whole boundary, 
so that the situation is different. 
In the following sections, 
we prove those theorems mentioned above. 
%

%
\section{Hitting Matrix}
\subsection{An extension of Temperley Bijection}
We first recall the results in 
\cite{NS2}.
We consider 
an imaginary vertex $R$
(called the root)
in the outer face of 
$G_{1, T}$
and let 
$G_{1, T, R}$
be the graph obtained by connecting all vertices in  
$\partial G_{1, T}$
to the root 
$R$(Figure \ref{G1TR}). 
%

\begin{figure}
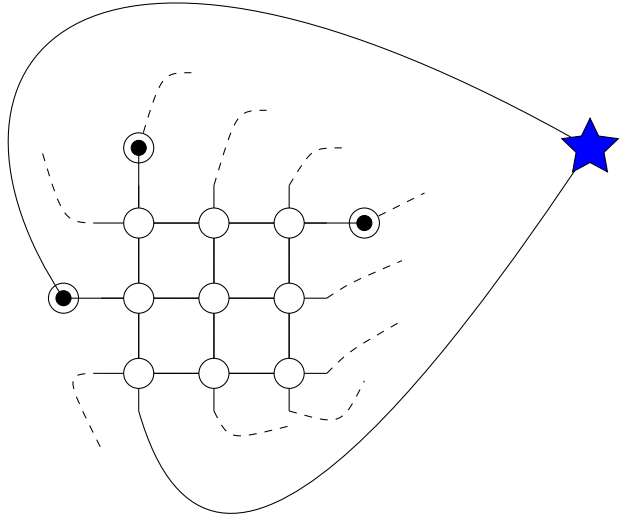

\begin{center}
\bet

\draw (0,0) grid +(2, 2) ;

\draw (0, 3) circle ( 0.2 cm ) ; 
\draw (0, 3) [ fill = black ] circle ( 0.1 cm ) ; 

\draw (-1, 1) circle ( 0.2 cm ) ; 
\draw (-1, 1) [ fill = black ] circle ( 0.1 cm ) ; 

\draw (3, 2) circle ( 0.2 cm ) ; 
\draw (3, 2) [ fill = black ] circle ( 0.1 cm ) ; 

\draw (0, 2) --+(0, 1) ; 
\draw (0, 1) --+(-1, 0) ; 
\draw (2, 2) --+(1, 0) ;

\foreach \x in {0, 1, 2}
\foreach \y in {0, 1, 2}
{
\draw (\x, \y) --+(-0.5, 0) ; 
\draw (\x, \y) --+(0.5, 0) ; 
\draw (\x, \y) --+(0, -0.5) ; 
\draw (\x, \y) --+(0, 0.5) ; 
}

\foreach \x in {0, 1, 2}
\foreach \y in {0, 1, 2}
{
\draw (\x, \y)[ fill = white ] circle (0.2 cm ) ; 
}

\draw (-1, 1) .. controls  (-3, 4) and (-1, 7)  .. (6, 3) ; 
\draw (0, -0.5) .. controls (1, -4) and (4, 0)  .. (6, 3) ; 

\draw [ dashed ] (-0.5, 0) .. controls (-1, 0) .. (-0.5, -1) ; 
\draw [ dashed ] (-0.5, 2) .. controls (-1, 2) .. (-1.3, 3) ; 
\draw [ dashed ] (0, 3) .. controls (0.3, 4) .. (0.7, 4) ; 
\draw [ dashed ] (1, 2.5) .. controls (1.3, 3.5) .. (1.7, 3.5) ; 
\draw [ dashed ] (2, 2.5) .. controls (2.3, 3.) .. (2.7, 3.) ; 
\draw [ dashed ] (3, 2) .. controls (3.2, 2.1) .. (3.8, 2.4) ; 
\draw [ dashed ] (2.5, 1) .. controls (2.8, 1.2) .. (3.5, 1.5) ; 
\draw [ dashed ] (2.5, 0) .. controls (2.8, 0.3) .. (3.5, 0.7) ; 

\draw [ dashed ] (2, -0.5) .. controls (2.7, -0.7) .. (3, -0.1) ; 
\draw [ dashed ] (1, -0.5) .. controls (1.2, -0.9) .. (2, -0.7) ; 

\node [ star, fill = blue, star point height = .2 cm, minimum size = 0.5 cm, draw  ] at (6, 3) {} ; 

\ent
\end{center}
\caption{
$G_{1, T, R}$ : 
all vertices on $\partial G_{1, T}$ are connected to $R$.
}
\label{G1TR}
\end{figure}
%


%
{\bf TI-tree}
is a tree on 
$G_{1, T,R}$
starting from the root 
$R$, 
which directly connects it to a terminal, and ends at a vertex of 
$G_{1, T}$. 
{\bf TO-tree}
is a tree on 
$G_{1, T, R}$
starting at a terminal and ends at the root 
$R$
through a boundary vertex. 
{\bf IO-tree}
is a tree on 
$G_{1, T, R}$
starting at a vertex of 
$G_1$ and ends at the root 
$R$ 
through a boundary vertex(Figure \ref{G1TR-Tree}). 
Theorem 2.8 in \cite{NS2}, 
in a slightly different form, is : 
%

\begin{figure}
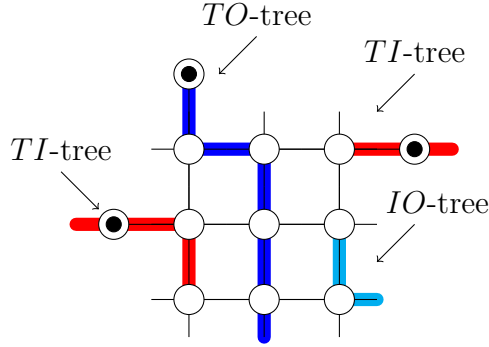

\begin{center}
\bet

\draw (0,0) grid +(2, 2) ; 

\draw [ line cap = round, color = red, line width = 5  ] (0, 0) --++(0, 1) --++(-1.5, 0); 
\draw [ <- ] (-1.2, 1.2) --+(-0.5, 0.5) ;  
\draw (-1.7, 2) node {$TI$-tree} ;

\draw [ line cap = round, color = red, line width = 5  ] (2,2) --++(1.5, 0); 
\draw [ <- ] (2.5, 2.5) --+(0.5, 0.5) ; 
\draw (3, 3.3) node {$TI$-tree} ; 

\draw [ line cap = round, color = blue, line width = 5  ] (0,3) --++(0,-1) --++(1,0) --++(0, -2.5)  ; 
\draw [ <- ] (0.4, 3) --+(0.5, 0.5) ; 
\draw (0.9, 3.8) node {$TO$-tree} ;

\draw [ line cap = round, color = cyan, line width = 5  ] (2, 1) --++(0,-1) --++(0.5,0); 
\draw [ <- ] (2.5, 0.5) --+(0.5, 0.5) ; 
\draw (3.3, 1.3) node {$IO$-tree} ; 

\draw (0, 2) --+(0, 1) ; 
\draw (0, 1) --+(-1, 0) ; 
\draw (2, 2) --+(1, 0) ; 

\draw (0, 3) [ fill = white ] circle ( 0.2 cm ) ; 
\draw (0, 3) [ fill = black ] circle ( 0.1 cm ) ; 

\draw (-1, 1) [ fill = white ]  circle ( 0.2 cm ) ; 
\draw (-1, 1) [ fill = black ] circle ( 0.1 cm ) ; 

\draw (3, 2)  [ fill = white ] circle ( 0.2 cm ) ; 
\draw (3, 2) [ fill = black ] circle ( 0.1 cm ) ;

\foreach \x in {0, 1, 2}
\foreach \y in {0, 1, 2}
{
\draw (\x, \y) --+(-0.5, 0) ; 
\draw (\x, \y) --+(0.5, 0) ; 
\draw (\x, \y) --+(0, -0.5) ; 
\draw (\x, \y) --+(0, 0.5) ; 
}

\foreach \x in {0, 1, 2}
\foreach \y in {0, 1, 2}
{
\draw (\x, \y)[ fill = white ] circle (0.2 cm ) ; 
}

\ent
\end{center}
\caption{
Example of 
TI-tree, TO-tree, and IO-tree. 
$R$
is omitted. 
}
\label{G1TR-Tree}
\end{figure}
%

%
\begin{theorem}
\label{bijection}
We have a bijection between the following two sets. 
\beq
{\cal M}(G^{(k)})
&:=&
\{ \mbox{ perfect matchings on }G^{(k)} \}
\\
{\cal F}(G^{(k)}, Q)
&:=&
\{
(T, S, \{ e_j \}_{j=1}^k)
\, | \,
\mbox{ $T$ : spanning tree on 
$G_{1, T, R}$, }
\\
&& \qquad
\mbox{
$S$ : spanning forest on 
$G_2$, }
\\
&& \qquad
\mbox{ 
$\{ e_j \}_{j=1}^k$ : 
configuration of impurites, 
with condition ${\bf (Q)}$  }
\}
\eeq
${\bf Q}$ : 
\\
(1)
$T$
is composed of 
$k$ TI-trees, 
$(k-1)$ TO-trees, 
and the other ones are 
$IO$-trees, 
\\
(2)
$S$
is composed of $k$ trees, 
\\
(3)
$T$, $S$ 
are disjoint of each other, and the 
$k$ TI-trees of 
$T$
and the $k$ trees of 
$S$
are paired by impurities.
\end{theorem}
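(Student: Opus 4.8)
The plan is to establish the bijection directly, by the Temperley-type decoding in which the way each middle and boundary vertex is matched simultaneously records an edge of a tree on $G_{1,T,R}$ and an edge of a forest on $G_2$. The key local fact is that every middle vertex sits at the crossing of one edge of $G_2$ and one edge of $G_{1,T}$, and in $G^{(k)}$ its neighbours split into those lying in the $G_2$-direction and those (reached along the added diagonals) lying in the $G_{1,T}$-direction; each boundary vertex likewise has neighbours pointing inward toward $G_1$ and outward through the outer face, where the root $R$ collects them. Since a perfect matching $M$ covers each such vertex exactly once, the partner of every middle or boundary vertex selects a single incident edge, and I would assign that edge to $S$ if the partner lies in $G_2$ and to $T$ if it lies in $G_{1,T,R}$.

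First I would define the forward map $\Phi\colon M \mapsto (T,S,\{e_j\})$. Let $S$ be the set of $G_2$-edges whose middle vertex is matched toward $G_2$, let $T$ be the set of $G_{1,T,R}$-edges selected by the remaining middle and boundary vertices (with $R$ absorbing every match that points out through the outer face), and let $\{e_j\}_{j=1}^{k}$ be the impurity edges of $M$, namely the edges of $M$ joining a vertex of $G_{1,T}$ directly to a vertex of $G_2$. That there are exactly $k$ such edges is the already-noted fact that the number of impurities is constant, equal to $k$, on ${\cal M}(G^{(k)})$; these impurities are exactly the places where a $G_1$-end and a $G_2$-end are withdrawn from the ordinary tree-building, so that the $G_1$-end $I^{(j)}_1$ becomes the terminal vertex of a branch of $T$ and the $G_2$-end becomes the root of a component of $S$.

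The heart of the argument, and the step I expect to be the main obstacle, is verifying that $(T,S,\{e_j\})$ satisfies condition $(\mathbf{Q})$. The acyclicity, spanning property, and component counts I would obtain from the Temperley/Euler bookkeeping: each middle or boundary vertex is matched to exactly one neighbour and so selects one edge, assigned to $T$ or to $S$, and an Euler-characteristic comparison of the numbers of selected edges against the numbers of vertices of $G_{1,T,R}$ and of $G_2$ — keeping track of how the $k$ impurities and the root $R$ enter — should force $T$ to be a single spanning tree of $G_{1,T,R}$ and $S$ a spanning forest of $G_2$ with exactly $k$ components. The labelling into $k$ TI-trees, $k-1$ TO-trees and IO-trees is then a matter of tracing, once $T$ is rooted at $R$, how the $2k-1$ terminals and the $k$ impurities attach: each impurity terminates one branch that reaches $R$ through a terminal and ends at $I^{(j)}_1$, a TI-tree, and pairs it with the $S$-component rooted at the impurity's $G_2$-end; the remaining $2k-1-k=k-1$ terminals are not consumed by impurities and so must exit through boundary vertices, giving the $k-1$ TO-trees, while every other branch exits through a boundary vertex with no terminal, giving the IO-trees. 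Making this bookkeeping rigorous near $\partial G_1$, where the boundary vertices, the extra outer edges and the identifications at $R$ interact, is the delicate part.

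Finally I would construct the inverse $\Psi$ by reversing the correspondence: given $(T,S,\{e_j\})$, the edge of $T$ or $S$ passing through each middle or boundary vertex prescribes its unique dimer, and each impurity $e_j$ supplies the dimer covering the paired $I^{(j)}_1$ and its $G_2$-partner; one then checks that the resulting edge set covers every vertex of $G^{(k)}$ exactly once and that $\Psi\circ\Phi$ and $\Phi\circ\Psi$ are identities, which is routine once the local correspondence has been tabulated at each vertex type (interior, middle, boundary, terminal). Since the statement is a mild reformulation of Theorem 2.8 of \cite{NS2}, I would quote that correspondence wherever possible and redo only the parts affected by the present choice of root $R$ and the explicit $(\mathbf{Q})$-labelling.
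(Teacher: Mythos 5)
The paper offers no proof of this statement: it is imported directly (``in a slightly different form'') as Theorem 2.8 of \cite{NS2}, so there is no internal argument to compare against. Your sketch is the standard generalized Temperley bijection that \cite{NS2} itself uses --- local decoding of the dimer at each middle and boundary vertex into a tree edge of $G_{1,T,R}$ or a forest edge of $G_2$, with the $k$ impurity edges pairing TI-trees to forest components and an Euler-characteristic count forcing the spanning/acyclicity properties --- and your plan to quote \cite{NS2} for the delicate boundary bookkeeping is exactly what the authors do, so the approach is essentially the same.
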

Figure \ref{G2-Forest}
is the spanning forest 
$S$ 
on 
$G_2$ 
corresponding to the spanning tree of 
$G_{1, T, R}$ 
shown in Figure \ref{G1TR-Tree}, 
and Figure \ref{Matching} shows the corresponding perfect matching of $G^{(2)}$. 
Sometimes 
TI-tree and TO-tree sticks together to form another tree(Figure \ref{TITO}). 
For one impurity case($k=1$), 
$T$
is composed of a TI-tree and some IO-trees and the bijection in 
Theorem \ref{bijection} is the same as  Temperley's bijection(Figure \ref{One-Impurity}). 
%

\begin{figure}
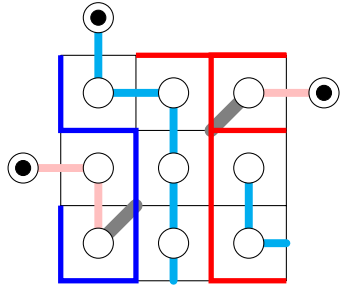

\begin{center}
\bet

\draw (0,0) grid +(3, 3) ; 

\begin{scope} [ xshift = 0.5 cm, yshift = 0.5 cm ] 
{

\draw (0, 2) --+(0, 1) ; 
\draw (0, 1) --+(-1, 0) ; 
\draw (2, 2) --+(1, 0) ; 

\draw [ line cap = round, color = pink, line width = 3  ] (0,0) --++(0,1) --++(-1,0); 

\draw [ line cap = round, color = pink, line width = 3  ] (2,2) --++(1, 0); 

\draw [ line cap = round, color = cyan, line width = 3  ] (0,3) --++(0,-1) --++(1,0) --++(0, -2.5)  ; 

\draw [ line cap = round, color = cyan, line width = 3  ] (2, 1) --++(0,-1) --++(0.5,0); 

\draw [ line cap = round, color = gray, line width = 5 ] (0, 0) --+(0.5, 0.5) ; 
\draw [ line cap = round, color = gray, line width = 5 ] (2, 2) --+(-0.5, -0.5) ; 

\foreach \x in {0, 1, 2}
\foreach \y in {0, 1, 2}
{
\draw (\x, \y) [ fill = white ] circle (0.2 cm) ; 
}

\draw (0, 3)  [ fill = white ] circle ( 0.2 cm ) ; 
\draw (0, 3) [ fill = black ] circle ( 0.1 cm ) ; 

\draw (-1, 1)  [ fill = white ] circle ( 0.2 cm ) ; 
\draw (-1, 1) [ fill = black ] circle ( 0.1 cm ) ; 

\draw (3, 2)  [ fill = white ] circle ( 0. 2 cm ) ; 
\draw (3, 2) [ fill = black ] circle ( 0.1 cm ) ;

}
\end{scope}

\draw [ color = blue, line width = 2 ]  (0, 1) --++(0, -1) --++(1, 0) --++(0, 2) --++(-1, 0) --++(0, 1) ; 
\draw [ color = red, line width = 2 ] (1, 3) --++(2, 0) --++(-1, 0) --++(0, -3) --++(1, 0) ; 
\draw [ color = red, line width = 2 ] (2, 2) --++(1, 0) ;

\ent
\end{center}
\caption{
The spanning forest on 
$G_2$ 
corresponding to the spanning tree in Figure 2.2. 
}
\label{G2-Forest}
\end{figure}
%

\begin{figure}
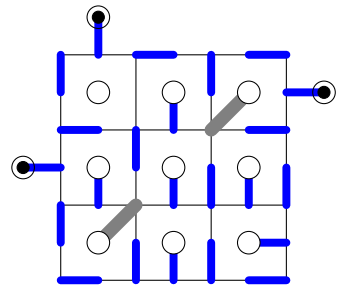

\begin{center}
\bet

\draw (0,0) grid +(3, 3) ;

\draw [ line cap = round, color = gray, line width = 5 ] (0.5, 0.5) --+(0.5, 0.5) ; 
\draw [ line cap = round, color = gray, line width = 5 ] (2, 2) --+(0.5, 0.5) ; 

\draw [ line cap = round, color = blue, line width = 3 ] (0,0) --+(0.5,0) ; 
\draw [ line cap = round, color = blue, line width = 3 ] (1,0) --+(0,0.5) ;
\draw [ line cap = round, color = blue, line width = 3 ] (2,0) --+(0,0.5) ;
\draw [ line cap = round, color = blue, line width = 3 ] (3,0) --+(-0.5,0) ;

\draw [ line cap = round, color = blue, line width = 3 ] (0, 1) --+(0,-0.5) ;
\draw [ line cap = round, color = blue, line width = 3 ] (2, 1) --+(0,0.5) ;
\draw [ line cap = round, color = blue, line width = 3 ] (3, 1) --+(0,0.5) ;

\draw [ line cap = round, color = blue, line width = 3 ] (0, 2) --+(0.5,0) ;
\draw [ line cap = round, color = blue, line width = 3 ] (1, 2) --+(0,-0.5) ;
\draw [ line cap = round, color = blue, line width = 3 ] (3, 2) --+(-0.5,0) ;

\draw [ line cap = round, color = blue, line width = 3 ] (0, 3) --+(0,-0.5) ;
\draw [ line cap = round, color = blue, line width = 3 ] (1, 3) --+(0.5,0) ;
\draw [ line cap = round, color = blue, line width = 3 ] (2, 3) --+(0,-0.5) ;
\draw [ line cap = round, color = blue, line width = 3 ] (3, 3) --+(-0.5,0) ;

\draw [ line cap = round, color = blue, line width = 3] (0, 1.5) --+(-0.5,0) ; 
\draw [ line cap = round, color = blue, line width = 3 ] (0.5, 3) --+(0,0.5) ;
\draw [ line cap = round, color = blue, line width = 3 ] (3, 2.5) --+(0.5,0) ;

\draw [ line cap = round, color = blue, line width = 3 ] (1.5, 0) --+(0,0.5) ;
\draw [ line cap = round, color = blue, line width = 3 ] (3, 0.5) --+(-0.5,0) ;

\draw [ line cap = round, color = blue, line width = 3 ] (0.5, 1) --+(0,0.5) ;
\draw [ line cap = round, color = blue, line width = 3 ] (1.5, 1) --+(0,0.5) ;
\draw [ line cap = round, color = blue, line width = 3 ] (2.5, 1) --+(0,0.5) ;

\draw [ line cap = round, color = blue, line width = 3 ] (1.5, 2) --+(0,0.5) ;

\begin{scope} [ xshift = 0.5 cm, yshift = 0.5 cm ] 
{
\foreach \x in {0, 1, 2}
\foreach \y in {0, 1, 2}
{
\draw (\x, \y) [ fill = white ] circle (0.15 cm) ; 
}

\draw (0, 3) circle ( 0.15 cm ) ; 
\draw (0, 3) [ fill = black ] circle ( 0.08 cm ) ; 

\draw (-1, 1) circle ( 0.15 cm ) ; 
\draw (-1, 1) [ fill = black ] circle ( 0.08 cm ) ; 

\draw (3, 2) circle ( 0.15 cm ) ; 
\draw (3, 2) [ fill = black ] circle ( 0.08 cm ) ; 


}
\end{scope}

\ent
\end{center}
\caption{
The corresponding perfect matching on $G^{(2)}$
}
\label{Matching}
\end{figure}
%

\begin{figure}
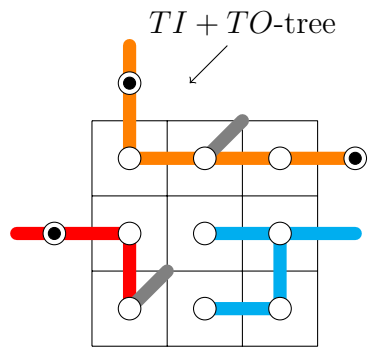

\begin{center}
\bet

\draw (0,0) grid +(3, 3) ; 

\begin{scope} [ xshift = 0.5 cm, yshift = 0.5 cm ] 
{

\draw (0, 2) --+(0, 1) ; 
\draw (0, 1) --+(-1, 0) ; 
\draw (2, 2) --+(1, 0) ; 

\draw [ line cap = round, color = red, line width = 5 ] (0,0) --++(0, 1) --++(-1.5, 0) ; 
\draw [ line cap = round, color = cyan, line width = 5 ] (1, 0) --++(1, 0) --++(0, 1) ; 
\draw [ line cap = round, color = cyan, line width = 5 ] (1,1) --++(2, 0) ;
\draw [ line cap = round, color = orange, line width = 5 ] (0, 3.5) --++(0, -1.5) --++(3, 0) ; 
\draw [ <- ] (0.8, 3) --+(0.5, 0.5) ; 
\draw (1.5, 3.8) node {$TI+TO$-tree} ; 

\draw [ line cap = round, color = gray, line width = 5 ] (0, 0) --+(0.5, 0.5) ; 
\draw [ line cap = round, color = gray, line width = 5 ] (1, 2) --+(0.5, 0.5) ; 

\foreach \x in {0, 1, 2}
\foreach \y in {0, 1, 2}
{
\draw (\x, \y) [ fill = white ] circle (0.15 cm) ; 
}

\draw (0, 3) [ fill = white ]  circle ( 0.15 cm ) ; 
\draw (0, 3) [ fill = black ] circle ( 0.08 cm ) ; 

\draw (-1, 1)  [ fill = white ] circle ( 0.15 cm ) ; 
\draw (-1, 1) [ fill = black ] circle ( 0.08 cm ) ; 

\draw (3, 2)  [ fill = white ] circle ( 0.15 cm ) ; 
\draw (3, 2) [ fill = black ] circle ( 0.08 cm ) ; 

}
\end{scope}

\ent
\end{center}
\caption{
$TI + TO$-tree
}
\label{TITO}
\end{figure}
%

\begin{figure}
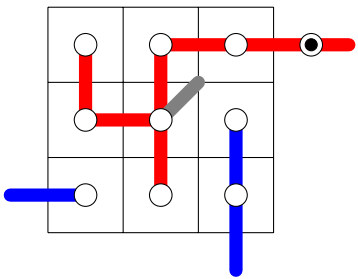

\begin{center}
\bet

\draw (0,0) grid +(3, 3) ; 

\begin{scope} [ xshift = 0.5 cm, yshift = 0.5 cm ] 
{

\draw (2, 2) --+(1, 0) ; 

\draw [ line cap = round, color = blue, line width = 5 ] (0,0) --++(-1, 0) ; 
\draw [ line cap = round, color = blue, line width = 5 ] (2, 1) --++(0, -2) ; 
\draw [ line cap = round, color = red, line width = 5 ] (3.5, 2) --++(-2.5, 0) --++(0, -2) ; 
\draw [ line cap = round, color = red, line width = 5 ] (0, 2) --++(0, -1) --++(1, 0) ; 

\draw [ line cap = round, color = gray, line width = 5 ] (1, 1) --+(0.5, 0.5) ;

\foreach \x in {0, 1, 2}
\foreach \y in {0, 1, 2}
{
\draw (\x, \y) [ fill = white ] circle (0.15 cm) ; 
}

\draw (3, 2) [ fill = white ] circle ( 0.15 cm ) ; 
\draw (3, 2) [ fill = black ] circle ( 0.08 cm ) ;

}
\end{scope}

\ent
\end{center}
\caption{
The case of one impurity
}
\label{One-Impurity2}
\end{figure}
%


%
\subsection{Proof of Theorem \ref{two impurities}
using the hitting matrix method}
By Theorem \ref{bijection}, 
$M(a,b)$
is equal to the number of elements in 
${\cal F}(G^{(2)}, Q)$
such that the corresponding spanning tree of 
$G_{1, T, R}$ 
is composed of 
(i)
a TI-tree connecting 
$a$ and $T_1$, 
(ii)
a TI-tree connecting 
$b$ and $T_3$, and 
(iii)
a TO-tree connecting an element of 
$\partial C$
and
$T_2$ : 
\begin{eqnarray}
M(a,b)
&=&
\sharp \{
F \, | \, 
F 
\mbox{ is a spanning tree on $G_{1,T, R}$ 
composed of }
\nonumber
\\
&&
\mbox{
(i) TI-tree connecting $a$ and $T_1$, 
(ii) TI-tree connecting $b$ and $T_3$}
\nonumber
\\
&&
\mbox{ 
(iii) TO-tree connecting $\partial C$ and $T_2$, 
and
}
\nonumber
\\
&&
\mbox{
(iv) other IO-trees }
\}
\label{forest}
\end{eqnarray}
Let 
$G_{1, R}$
be the dual graph of 
$G_2$
in the sense that we regard the outer face as a face of 
$G_2$
and put a dual vertex 
$R$
there, 
which is called the root(Figure \ref{G1-Bar}).
$G_{1, R}$
is also obtained  
by identifying all terminals in 
$G_{1, T, R}$ 
with 
$R$.  
%

\begin{figure}
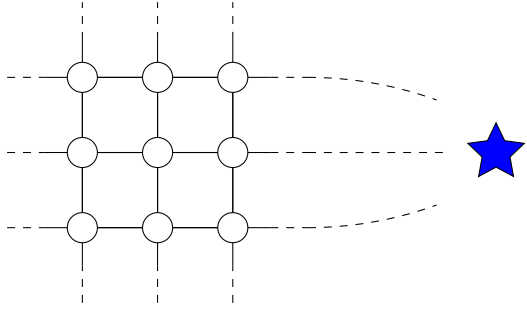

\begin{center}
\bet

\draw (0,0) grid +(2, 2) ; 

\foreach \x in {0, 1, 2}
{
\draw (-0.5, \x) --+(3, 0) ; 
\draw (\x, -0.5) --+(0, 3) ; 

\draw [ dashed ] (-1, \x) --+(0.5, 0) ; 
\draw [ dashed ] (2.5, \x) --+(0.5, 0) ; 
\draw [ dashed ] (\x, -1) --+(0, 0.5) ; 
\draw [ dashed ] (\x, 2.5) --+(0, 0.5) ; 
}

\foreach \x in {0, 1, 2}
\foreach \y in {0, 1, 2}
{
\draw [ fill = white ]  (\x, \y) circle (0.2cm) ; 
}

\node [ star, fill = blue, star point height = .2 cm, minimum size = 0.5 cm, draw  ] at (5.5, 1) {} ; 

\draw [ dashed ] (3, 2) arc (90 : 70 : 5) ; 
\draw [ dashed ] (3, 1)  --+ (1.8, 0) ; 
\draw [ dashed ] (3, 0) arc (270 : 290 : 5) ; 

\ent
\end{center}
\caption{
In 
$G_{1, R}$, 
all vertices on the boundary of 
$G_1$
are connected to $R$ such that their degree $=4$.
}
\label{G1-Bar}
\end{figure}
%


%
Then the RHS of 
(\ref{forest})
is equal to the number of spanning trees
$T$ 
of 
$G_{1, R}$
with the following property : 
cutting all the edges connecting to 
$R$, 
yields a spanning forest
$T_1$ 
of 
$G_1$ 
which satisfies 
(i) $a$ and $t_1$
are connected, 
(ii)
$b$ and $t_3$ 
are connected, 
(iii)
$\partial C$ and $t_2$ 
are connected : 
\beq
M(a,b) &=&
\{ T \, | \, 
T_1 \mbox{ is a spanning forest on $G_1$ such that }
\\
&& 
\mbox{ 
(i) $a$ is connected to $t_1$, 
(ii) $b$ is connected to $t_3$, }
\\
&&
\mbox{
(iii) $\partial C$ is connected to $t_2$, }
\}
\eeq
By taking acount of 
Wilson's algorithm \cite{Wilson}
generating the spanning trees on 
$G_{1, R}$
uniformly at random, we have 
\beq
&&
\frac {M(a,b)}
{\sharp \{ \mbox{ spanning trees on } G_{1, R} \} }
\\
&=&
\sum_{\sigma}
(\mbox{ sgn } \sigma)
\;
w(\pi(a, t_{\sigma_1}))
w(\pi(b, t_{\sigma_2}))
w(\pi(\partial C, t_{\sigma_3}))
1_{A(\sigma)}. 
\eeq
In the notation above, 
$\pi(x,y)$
is the walk from $x$ to $y$, 
$| \pi |$
is the length of the walk 
$\pi$, 
$w(\pi) :=4^{- | \pi |}$
is the weight of the walk
$\pi$,
and 
$1_{A(\sigma)}$
is the indicator function of the conditition 
$A(\sigma)$ : 
``
$\pi(b, t_{\sigma_2})$
is disjoint from the loop erased part of 
$\pi(a, t_{\sigma_1})$
and 
$\pi(c, t_{\sigma_3})$
is disjoint from the loop erased part of 
$\pi(b, t_{\sigma_2})$" : 
\beq
A(\sigma) := \{
\pi(b, t_{\sigma_2}) \cap LE(\pi(a, t_{\sigma_1})) = \emptyset, 
\;
\pi(c, t_{\sigma_3}) \cap LE(\pi(b, t_{\sigma_2})) = \emptyset
\}
\eeq
Therefore by 
Theorem 7.2 in \cite{Fomin} 
and the matrix tree theorem : 
$\sharp \{ \mbox{ spanning trees on } G_{1,R} \} = \det (K)$
we have 
\begin{equation}
M(a,b) = 
\left|
\det
\left( \begin{array}{ccc}
H_{a,t_1} & H_{a,t_2} & H_{a,t_3} \\
H_{b,t_1} & H_{b,t_2} & H_{b,t_3} \\
H_{\partial C,t_1} & H_{\partial C,t_2} & H_{\partial C,t_3} 
\end{array} \right)
\det (K)
\right|
\label{hitting-matrix}
\end{equation}
where 
$H_{x, t_i}$
is the hitting probability from 
$x$ 
to 
$t_1$ :
probability of a simple random walk on 
$G_{1, R}$
starting at 
$x$
hitting the root 
$R$
through 
$t_i$, 
and 
\[
H_{\partial C, t_i}
:=
\sum_{z \in \partial C} H_{z, t_i}.
\]
To compute 
$H_{x, y}$, 
let 
$Q$
be a matrix on 
$G_1$ 
given by 
\[
Q_{x, y} :=\cases{
\frac 14 & $(x \sim y)$ \cr
0 & (otherwise) \cr
}
\]
$Q$
is the transition probability matrix of the 
SRW on 
$G_{1, R}$
restricted to 
$G_1$. 
Since 
$4(I - Q) = K$, 
we have 
\beq
H_{x, t_i}
&=&
( (I - Q)^{-1} )_{x,  t_i} \cdot \frac 14
\\
&=&
K^{-1}_{x,t_i}
=
L_{x, t_i}
\eeq
for
$x \in G_1$, $i=1,2,3$.
Substituting it into 
(\ref{hitting-matrix})
yields 
Theorem \ref{two impurities}.
%
\section{Alternative proofs by a Kenyon-Wilson's result}
\subsection{Kenyon-Wilson's results}
We first recall 
the results by 
Kenyon-Wilson\cite{KW1, KW2}.
Let 
$G_C$
be a planar, finite, and weighted graph, and for 
$e=\{ v, w \} \in E(G_C)$, 
let 
$w(e) = c_{vw}$
be the weight on that. 
We choose 
$n$
vertices on the boundary of 
$G_C$, 
number those points anticlockwise, and let 
$N := \{1, 2, \cdots, n \}$
which is called the {\bf node}. 
A graph with such a property is called a {\bf circular planar graph}. 
Let 
$I := V(G_C) \setminus N$. 
We say a spanning forest 
$g$ 
on 
$G_C$
is a {\bf grove} if each tree composing 
$g$
contains at least an element of 
$N$. 
A grove 
$g$
determines a planar partition (non-crossing partition) 
$\sigma$ 
of 
$N$.
We set 
\beq
{\cal P} 
&:=& \{
\mbox{ partitions of  }
\{1, 2, \cdots, n \}
\}
\\
{\cal P}_{pl} 
&:=& \{
\mbox{ planar-partitions of }
\{1, 2, \cdots, n \}
\}
\eeq
For a grove
$g$, 
let 
$w(g) := \prod_{e \in g}w(e)$
be its weight. 
For given planar partition 
$\sigma \in {\cal P}_{pl}$
of 
$N$, 
we consider all groves which realize the partition 
$\sigma$ 
and let 
$Z_{G_C} (\sigma) := \sum_{g \in \sigma} w(g)$
be their weighted sum. 
According to the decomposition 
$V(G_C) = N \cup I$, 
the Laplacian 
$\triangle_{G_C}$
on 
$G_C$
can be written as 
\begin{equation}
\triangle_{G_C} = 
\left( \begin{array}{cc}
F & G \\
H & K 
\end{array} \right).
\label{decomposition}
\end{equation}
We define the response matrix 
$L$
by 
\[
L = G K^{-1} H - F.
\]
$L$
has the following physical meaning.
We regard
$G_C$
as an electrical circuit with 
$c_{vw}$
being the conductance on the edge 
$\{ v, w \}$.  
For 
$i, j \in N$, 
set the voltage 
$f$
on $N$ by 
\[
f(x) = \cases{
1 & $(x = i)$ \cr
0 & (otherwise) \cr
}, 
\quad
x \in N.
\]
Then 
$L_{i, j}$
is equal to the current flowing out of 
$j$. 
$\sigma \in {\cal P}_{pl}$
is said to be {\bf bipartite} if it satisfies the following condition : 
(i) it is possible 
to color the vertices in 
$N$
contiguously into red and blue such that only the nodes of different colors are connected,  
(ii) if a node lies 
between red region and blue region, it can be split into two colors, 
and 
(iii) the number of each part in 
$\sigma$ 
is at most two. 

For bipartite 
$\sigma$, 
let 
$L_{X, Y}$
be the submatrix of 
$L$ 
such that the color of the row(resp. column) is 
$X$
(resp. $Y$).
We then have the following theorem. 
\begin{theorem}
{\bf (Theorem 3.1 in \cite{KW2})}\\
\label{bipartite}
For bipartite 
$\sigma \in {\cal P}_{pl}$, 
we have 
\[
Z(\sigma) = 
| \det L_{R,B} \cdot \det (K) |. 
\]
\end{theorem}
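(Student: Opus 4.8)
The plan is to collapse the grove partition function $Z(\sigma)$ onto a single minor of the full Laplacian $\triangle_{G_C}$, read that minor off combinatorially, and then let the circular planarity of $G_C$ do the bookkeeping of signs. Throughout I treat the essential case in which every node is colored, so that $N = R \sqcup B$ with $|R| = |B| = m$ and $\sigma$ is the matching pairing each red node with a blue one; the general bipartite case (a shared node split between the two colors, or singleton parts) is handled by the same steps after adjusting $R$, $B$, and the forest-incidence condition below.

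First I would clear the inverse $K^{-1}$ hidden in $L = GK^{-1}H - F$. Under the block decomposition (\ref{decomposition}) the blocks $F_{R,B}$, $G_{R,I}$, $H_{I,B}$, $K$ are exactly the corresponding submatrices of $\triangle_{G_C}$, so the Schur-complement identity
\[
\det \left( \begin{array}{cc} F_{R,B} & G_{R,I} \\ H_{I,B} & K \end{array} \right)
=
\det (K)\, \det \left( F_{R,B} - G_{R,I} K^{-1} H_{I,B} \right)
=
(-1)^m \det (K)\, \det L_{R,B}
\]
shows that $\det L_{R,B}\,\det(K)$ is, up to the sign $(-1)^m$, the minor of $\triangle_{G_C}$ keeping the rows indexed by $R \cup I$ and the columns indexed by $B \cup I$; equivalently, the minor obtained by deleting the $m$ rows $B$ and the $m$ columns $R$.

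Second I would apply the all-minors matrix-tree theorem (Chaiken) to this minor. Deleting the rows $B$ and the columns $R$, the theorem writes the minor as a signed weighted sum
\[
\det \left( \begin{array}{cc} F_{R,B} & G_{R,I} \\ H_{I,B} & K \end{array} \right)
=
\pm \sum_{F} (\mbox{sgn}\, \pi_F)\; w(F),
\]
where $F$ ranges over the spanning forests of $G_C$ with exactly $m$ trees, each tree containing exactly one node of $B$ and exactly one node of $R$, and $\pi_F$ is the permutation that pairs the blue node of a tree with the red node in that same tree. Thus each contributing forest is a grove whose induced partition is a perfect matching between $R$ and $B$, so these are precisely candidate groves whose partition is bipartite.

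The main obstacle, and the place where circular planarity is indispensable, is the third step: identifying which matchings actually occur and controlling their signs. Since all nodes lie on the outer boundary in cyclic order and the trees of $F$ are vertex-disjoint connected subsets of a planar graph, no two trees can join red--blue pairs whose boundary chords cross: the path inside one tree joining its two boundary nodes separates the disk, so by a Jordan-curve argument any disjoint tree is confined to one side and cannot join nodes lying on opposite arcs. Hence every contributing $F$ realizes a non-crossing matching, and because the coloring is contiguous the non-crossing matching between $R$ and $B$ is unique and equals $\sigma$. Therefore $\pi_F = \sigma$ for all $F$, the sign $\mbox{sgn}\,\pi_F$ is constant, no cancellation occurs, and the signed sum collapses to $\pm Z(\sigma)$. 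Combining this with the first step gives $\det L_{R,B}\,\det(K) = \pm Z(\sigma)$, and since $Z(\sigma)$ is a sum of products of positive edge weights, taking absolute values yields $Z(\sigma) = |\det L_{R,B}\,\det(K)|$, as claimed.
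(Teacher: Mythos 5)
The paper offers no proof of this statement at all: it is imported verbatim as Theorem 3.1 of \cite{KW2} and used as a black box, so there is no internal argument to compare yours against. Judged on its own, your proof is correct in the essential case you treat, and it is a genuinely different route from Kenyon--Wilson's published derivation: they first show that every normalized grove probability is an integer polynomial in the entries of $L$ via their theory of annular-one partitions and column reductions, and only then identify the bipartite (resp.\ tripartite) case with a determinant (resp.\ Pfaffian); you instead go directly through the Schur complement identity $\det L_{R,B}\,\det(K)=\pm\det$ of the submatrix of $\triangle_{G_C}$ with rows $R\cup I$ and columns $B\cup I$, Chaiken's all-minors matrix-tree theorem, and a Jordan-curve argument showing that vertex-disjoint trees cannot realize crossing chords, so that contiguity of the coloring forces every surviving forest to realize the unique non-crossing (``rainbow'') matching $\sigma$ and all signs agree. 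This buys a short, self-contained proof of exactly the determinantal case the paper needs, and it also makes transparent why condition {\bf (A)} of the Remark suffices for the non-planar graphs of Sections 3.2--3.3: planarity enters only to show $Z(\tau)=0$ for crossing $\tau$. The one place you are too quick is the sentence deferring the general bipartite case to ``the same steps'': the paper's actual applications (e.g.\ (\ref{zero}), where $D,E,F,G$ are singleton parts) require the version with isolated nodes, and the adjustment, while it does work, deserves a line --- a singleton node $s$ has both its row and its column deleted from $\triangle_{G_C}$, and then Chaiken's theorem forces the tree containing $s$ to contain no other node of $N$, since any companion from $R$ or $B$ would give that tree two elements of a deleted index set. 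Spelling this out, together with what ``splitting'' a boundary node means for the index sets of $L_{R,B}$, would make your proof cover the form of the theorem the paper actually invokes.
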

\begin{remark}
Theorem 3.1 in \cite{KW2}
treats more general case called the tripartite partition and write 
$Z(\sigma)$ 
in terms of Pfaffian. 
Theorem \ref{bipartite}
also holds for non-planar graph 
$G$
provided 
$G$
satisfies the following condition : 

{\bf (A)}
For any 
non-planar partition
$\tau \in {\cal P} \setminus {\cal P}_{pl}$, 
we always have
$Z_G(\tau) = 0$. 
\end{remark}
%
\subsection{One impurity case：
alternative proof of Theorem \ref{one impurity}}
For 
$x \in G_1$, 
we define a matrix 
$K_x$
by 
\[
K_x(y,z) := \cases{
K(x,x) + 1 & $((y,z) = (x,x))$ \cr
K(y,z) & (otherwise) \cr
}
\]
\begin{lemma}
\label{one impurity1}
\[
M (x) = 
| (K_x^{-1})(x,t)  \det (K_x) |.
\]
\end{lemma}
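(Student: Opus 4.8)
The plan is to treat this as a pure linear-algebra identity, using Theorem \ref{one impurity} as the only combinatorial input. Since that theorem already gives $M(x) = |(K^{-1})(x,t)\det K|$, it suffices to prove the exact identity
\[
(K_x^{-1})(x,t)\,\det(K_x) = (K^{-1})(x,t)\,\det(K)
\]
and then take absolute values. The key observation is that $K_x$ is a rank-one modification of $K$ supported on the single diagonal entry $(x,x)$: writing $e_x$ for the standard basis vector at $x$, the definition of $K_x$ says exactly $K_x = K + e_x e_x^{\top}$. Throughout I will use that $K=\triangle_{G_{1,T}}|_{G_1}$ is a Dirichlet-type restricted Laplacian and hence positive definite; consequently $K^{-1}\succ 0$ and $K_x = K + e_x e_x^{\top}\succ 0$ is invertible as well.

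The cleanest route is through the adjugate. For an invertible matrix $A$ one has $(A^{-1})(i,j)\det A = \mathrm{adj}(A)(i,j) = (-1)^{i+j}\,(\text{minor of }A\text{ obtained by deleting row }j\text{ and column }i)$. Applying this with $i=x$, $j=t$, the quantity $(K^{-1})(x,t)\det K$ equals $(-1)^{x+t}$ times the minor of $K$ formed by deleting row $t$ and column $x$. Now $K$ and $K_x$ differ only in column $x$ (indeed only in the entry $(x,x)$), and that entire column is removed when forming this particular minor; hence the corresponding minor of $K_x$ is literally identical. The sign $(-1)^{x+t}$ is the same for both, so $\mathrm{adj}(K_x)(x,t) = \mathrm{adj}(K)(x,t)$, which is precisely the displayed identity. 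As a cross-check one can compute directly: the matrix-determinant lemma gives $\det(K_x) = (1 + (K^{-1})(x,x))\det(K)$, while Sherman--Morrison gives $(K_x^{-1})(x,t) = (K^{-1})(x,t)/(1 + (K^{-1})(x,x))$, and multiplying the two makes the factor $1 + (K^{-1})(x,x)$ cancel.

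Combining the identity with Theorem \ref{one impurity} and taking absolute values yields $M(x) = |(K_x^{-1})(x,t)\det(K_x)|$, as claimed. I do not expect a genuine obstacle here; the only points meriting a word of justification are the nonvanishing of the denominator $1 + (K^{-1})(x,x)$ (equivalently $\det(K_x)\neq 0$) in the Sherman--Morrison version, which follows from $K^{-1}\succ 0$, and the fact that the adjugate route makes the equality \emph{exact}, with no residual sign, so the absolute value in the statement serves only to record that $M(x)$ is a nonnegative count. The conceptual content is simply that altering a single diagonal entry of $K$ changes neither the $(x,t)$-cofactor nor, once the determinant is reinstated, the value of $M(x)$ — which is exactly why $K_x$ can be substituted for $K$ when setting up the Kenyon--Wilson computation in the alternative proof.
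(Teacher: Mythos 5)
Your linear algebra is correct: $K_x = K + e_x e_x^{\top}$, the $(x,t)$ cofactor is unchanged because the modified entry lies in the deleted column $x$, and your Sherman--Morrison / matrix-determinant-lemma cross-check is exactly the algebra the paper records in (\ref{resolvent})--(\ref{determinant}). But your route runs in the opposite direction from the paper's, and this matters for the logical architecture of Section 3. The paper proves the lemma \emph{without} invoking Theorem \ref{one impurity}: it builds a circular planar graph $G_C$ by attaching a new node $X$ to $x$ (which is precisely what forces $K_x$ rather than $K$ into the restricted Laplacian) and identifying the boundary vertices into a node $Y$, identifies $M(x)$ with the grove partition function $Z_{G_C}(\sigma)$ for $\sigma = XT \mid Y$ via the bijection of Theorem \ref{bijection}, and then applies the Kenyon--Wilson formula (Theorem \ref{bipartite}) together with the computation $L_{X,T} = (K_x^{-1})(x,t)$ of the response matrix. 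The lemma is then used to \emph{re-derive} Theorem \ref{one impurity} by exactly the resolvent and determinant identities you cite as a sanity check, run the other way. So while your argument is a valid proof of the displayed identity as such --- Theorem \ref{one impurity} has an independent proof in \cite{NS1}, so there is no absolute circularity --- it contributes no new content, and if it were substituted for the paper's proof, the section's advertised ``alternative proof'' of Theorem \ref{one impurity} would become circular. What your approach buys is a two-line derivation from a known result; what it loses is the entire point of the lemma, namely an independent, Kenyon--Wilson-based computation of $M(x)$ that generalizes to the multi-impurity cases in Sections 3.3--3.5.
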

\begin{proof}
We construct a circular planar graph
$G_C$
by the following procedure(Figure \ref{GC}) : 
(i) 
add a new vertex 
$X$
to $x$, 
(ii)
add all boundary vertices to 
$G_1$, 
and identify them. 
Let 
$Y$
be the corresponding vertex and we set 
\beq
V(G_C) := I \cup N, 
\quad
I =V(G_1), 
\quad
N := \{ T, X, Y \}.
\eeq
%

\begin{figure}
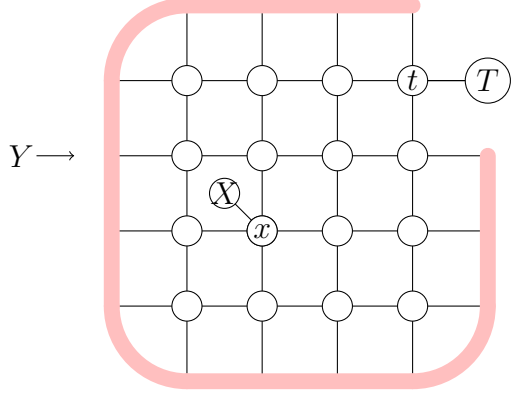

\begin{center}
\bet

\draw (0,0) grid +(3, 3) ; 

\foreach \x in {0, 1, 2, 3}
{
\draw (\x, 0) --+(0, -1) ; 
\draw (\x, 3) --+(0, 1) ; 
\draw (0, \x) --+(-1, 0) ; 
\draw (3, \x) --+(1, 0) ; 
}

\draw (3, 3) --+(1, 0) ; 

\foreach \x in {0, 1, 2, 3}
\foreach \y in {0, 1, 2, 3}
{
\draw  [ fill = white ] (\x, \y) circle (0.2 cm) ; 
}

\draw [ fill = white ] (4, 3) circle (0.3 cm) ; 
\draw (4,3) node {$T$} ; 
\draw (3, 3) node {$t$} ; 

\draw (1, 1) --+ (-0.5, 0.5) ; 

\draw [ fill = white ] (0.5, 1.5) circle (0.2 cm) ; 
\draw (0.5, 1.5) node {$X$} ; 
\draw [ fill = white ] (1,1) circle (0.2 cm) ; 
\draw (1, 1) node {$x$} ; 

\draw [ line cap = round, color = pink, line width = 6 ] (3, 4) --++(-3, 0) ; 
\draw [ line cap = round, color = pink, line width = 6 ] (0, 4) arc (90 : 180 : 1) ; 
\draw [ line cap = round, color = pink, line width = 6 ] (-1, 3) --++(0, -3) ; 
\draw [ line cap = round, color = pink, line width = 6 ] (-1, 0) arc (180 : 270 : 1) ; 
\draw [ line cap = round, color = pink, line width = 6 ] (0, -1) --+(3, 0) ; 
\draw [ line cap = round, color = pink, line width = 6 ] (3, -1) arc (270 : 360 : 1) ; 
\draw [ line cap = round, color = pink, line width = 6 ] (4, 0) --++(0, 2) ; 

\draw [ <- ] (-1.5, 2) --+(-0.5, 0) ; 
\draw (-2.2, 2) node {$Y$} ;

\ent
\end{center}
\caption{
$G_C$ in one impurity case : 
add $X$ to $x$, 
identify all boundary vertices with $Y$
}
\label{GC}
\end{figure}
%

Although 
$G_C$
is not planar, the condition 
(A)
is satisfied, for 
$N$
contains only three elements so that 
we do not have non-planar partitions.
Let 
$\sigma = XT | Y$
be a partition such that 
$X$ and $Y$ are connected and 
$Y$
is isolated. 
By
Theorem \ref{bijection}, 
we have 
$M(x)= Z_{G_C}(\sigma)$.
Writing 
$\triangle_{G_C}$
as in 
(\ref{decomposition}), 
the response matrix
$L$
satisfies 
$L = G K_x^{-1} H - F$.
Note that 
we added an extra vertex
$X$
to 
$x$, 
so that we have to use 
$K_x$
in 
(\ref{decomposition})
instead of 
$K$. 
Since 
$F=0$, 
\[
L_{X, T} = 
(G K_x^{-1} H)(X, T).
\]
Using
\beq
(G)_{ij} &=&
\cases{
-1 & ($i \in N, j \in I, i \sim j$) \cr
0 & (otherwise) 
}
\\
(H)_{ij} &=&
\cases{
-1 & ($i \in I, j \in N, i \sim j$) \cr
0 & (otherwise) 
}
\eeq
and the fact 
$t$
is the only vertex connected to the terminal 
$T$, 
we have 
\[
L_{X, T} = 
(K_x)^{-1}(x, t).
\]
Substituting it to 
Theorem \ref{bipartite}
yields the conclusion. 
\QED
\end{proof}
\noindent
{\it Proof of Theorem \ref{one impurity} }
We show that 
the dependence of 
$L_{X, T} = (K_x)^{-1}(x,t)$
on 
$x$
cancels with that of 
$\det K_x$.  
In fact, by taking the 
$(x,x)$-component of the resolvent equation 
\begin{equation}
K^{-1}_x  - K^{-1}  = K^{-1}_x (K - K_x) K^{-1} 
\label{resolvent}
\end{equation}
we have 
\beq
(K^{-1}_x)(x,x)  - (K^{-1})(x,x)
&=&
- (K^{-1}_x)(x,x)  (K^{-1})(x,x)
\eeq
thus 
\begin{equation}
(K_x)^{-1}(x,x)
=
\frac {
(K)^{-1}(x,x)
}
{
1 + (K)^{-1}(x,x)
}.
\label{diagonal}
\end{equation}
Moreover taking the 
$(x,t)$-component of 
(\ref{resolvent})
and using 
(\ref{diagonal}), 
\begin{equation}
(K_x)^{-1}(x,t)
=
\frac {
(K)^{-1}(x,t)}
{
1 + (K)^{-1}(x,x)
}.
\label{non-diagonal}
\end{equation}
Let 
$k_x$
be the submatrix of 
$K_x$ 
by eliminating the 
$x$-th row and $x$-th column.
By Cramer's formula, 
\[
(K)^{-1}(x,x)
=
\frac {\det (k^x)}{\det (K)}.
\]
On the other hand, 
expanding the determinant yields 
\[
\det (K_x) = \det (K) + \det (k^x)
\]
leading to 
\begin{equation}
1 + (K)^{-1}(x,x)
=
\frac {
\det (K) + \det (k^x)
}
{
\det (K)}
=
\frac {\det (K_x)}{\det (K)}.
\label{determinant}
\end{equation}
It then suffices to substitute 
(\ref{non-diagonal}), (\ref{determinant})
to 
Lemma \ref{one impurity1}
to finish the proof of 
Theorem \ref{one impurity}.
\QED

\subsection{Two impurity case I：
alternative proof of Theorem \ref{two impurities}
}
As we did in Section 3.3, 
we construct a circular planar graph 
$G_C$
as follows(Figure \ref{GC-Two-Impurities}) : 
(i)
add all the boundary vertices to $G_1$, 
(ii)
Let 
$A$, $B$ 
be the boundary vertices attached to 
$a$
and 
$b$
respectively, and identify all boundary vertices lying between 
$A$
and 
$B$, 
which we call 
$C$, and 
(iii)
identify the other boundary vertices in their respective region. 
Let 
$D, E, F, G$ 
be these vertices and set 
\beq
V(G_C) &:=& I \cup N, 
\quad
I := V(G_1), 
\quad
N := \{ T_1, T_2, T_3, A, B, C, D, E, F, G \}.
\eeq
%

\begin{figure}
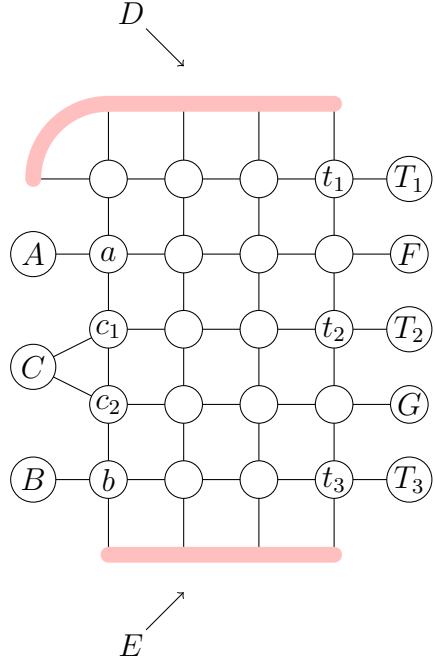

\begin{center}
\bet

\draw (0,0) grid +(3,4) ; 

\foreach \x in {0, 1, 2, 3}
{
\draw (\x, 0) --+(0, -1) ; 
\draw (\x, 4) --+(0, 1) ; 
}

\draw (0, 0) --+(-1, 0) ; 

\draw (0, 1) --+(-1, 0.5) ; 
\draw (0, 2) --+(-1, -0.5) ; 

\draw (0, 3) --+(-1, 0) ; 
\draw (0, 4) --+(-1, 0) ; 

\foreach \y in {0, 1, 2, 3, 4}
{
\draw (3, \y) --+(1, 0) ; 
}

\foreach \x in {0, 1, 2, 3, 4}
\foreach \y in {0, 1, 2, 3, 4}
{
\draw [ fill = white ] (\x, \y) circle (0. 25 cm) ; 
}
\draw (3, 0) node {$t_3$} ; 
\draw (3, 2) node {$t_2$} ; 
\draw (3, 4) node {$t_1$} ; 
\draw (0, 0) node {$b$} ; 
\draw (0, 1) node {$c_2$} ; 
\draw (0, 2) node {$c_1$} ; 
\draw (0, 3) node {$a$} ; 

\foreach \y in {0, 1.5, 3}
{
\draw [ fill = white ] (-1, \y) circle (0.3cm ) ; 
}
\draw (-1, 3) node {$A$} ; 
\draw (-1, 1.5) node {$C$} ; 
\draw (-1, 0) node {$B$} ; 

\foreach \y in {0, 2, 4}
{
\draw [ fill = white ] (4, \y) circle (0.3cm ) ; 
}
\draw (4, 0) node {$T_3$} ;
\draw (4, 2) node {$T_2$} ;
\draw (4, 4) node {$T_1$} ;

\draw [ line cap = round, color = pink, line width = 6 ] (3, 5) --+(-3, 0) ; 
\draw [ line cap = round, color = pink, line width = 6 ] (0, 5) arc (90 : 180 : 	1) ; 
\draw [ <- ] (1, 5.5)--+(-0.5, 0.5) ; 
\draw (0.3, 6.2) node {$D$} ; 
 
 \draw [ line cap = round, color = pink, line width = 6 ] (0, -1) --+(3, 0) ; 
\draw [ <- ] (1, -1.5) --+(-0.5, -0.5) ; 
\draw (0.3, -2.2) node {$E$} ; 
 
 \draw (4, 3) node {$F$} ; 

 \draw (4, 1) node {$G$} ; 
 
\ent
\end{center}
\caption{
$G_C$ in two impurity case
}
\label{GC-Two-Impurities}
\end{figure}
%


%
Let 
$t_1, t_2, t_3, a, b \in I$
be the vertices connected to the nodes 
$T_1, T_2, T_3, A, B$
respectively. 
By 
(\ref{forest}), 
$M(a,b)$
is equal to the number of the groves on 
$G_C$ 
such that 
$A \leftrightarrow T_1, \partial C \leftrightarrow T_2, B \leftrightarrow T_3$
and 
$D, E, F, G$
are isolated. 
The corresponding partition 
$\sigma \in {\cal P}_{pl}$
is bipartite (Figure \ref{Sigma}) so that by 
Theorem \ref{bipartite}
we have
\begin{equation}
Z_{G_C}(\sigma)
= 
\left|
\det 
\left(
\begin{array}{ccc}
L_{A,T_1} & L_{A, T_2} & L_{A, T_3} \\
L_{ C, T_1} & L_{ C, T_2} & L_{ C, T_3} \\
L_{B, T_1} & L_{B, T_2} & L_{B, T_3} \\
\end{array}
\right)
\cdot
\det(K)
\right|.
\label{zero}
\end{equation}
%

\begin{figure}
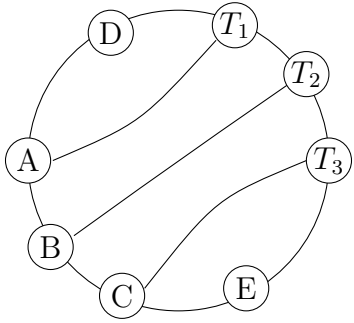

\begin{center}
\bet

\draw (0,0) circle (2cm) ; 

\draw [ fill = white ] (2, 0) circle (0.3 cm) ; 
\draw  (2, 0) node {$T_3$} ; 

\draw [ fill = white ] (1.7, 1.15) circle (0.3 cm) ; 
\draw  (1.7, 1.15) node {$T_2$} ; 

\draw [ fill = white ] (0.75, 1.8) circle (0.3 cm) ; 
\draw  (0.75, 1.8) node {$T_1$} ; 

\draw [ fill = white ] (-0.9, 1.7) circle (0.3 cm) ; 
\draw  (-0.9, 1.7) node {D} ; 

\draw [ fill = white ] (0.9, -1.7) circle (0.3 cm) ; 
\draw   (0.9, -1.7)  node {E} ; 

\draw [ fill = white ] (-2, 0) circle (0.3 cm) ; 
\draw  (-2, 0) node {A} ; 

\draw [ fill = white ] (-1.7, -1.15) circle (0.3 cm) ; 
\draw  (-1.7, -1.15) node {B} ; 

\draw [ fill = white ] (-0.75, -1.8) circle (0.3 cm) ; 
\draw  (-0.75, -1.8) node {C} ; 

\draw (-1.67, 0) .. controls (-0.5,0.5) .. (0.5, 1.6) ; 
\draw (-1.39, -1) .. controls (0, 0) .. (1.44, 1) ; 
\draw (-0.45, -1.7) .. controls (0.5, -0.5) .. (1.7, 0) ; 

\ent
\end{center}
\caption{
The corresponding planar partition
$\sigma$.
P, Q, R are red, A, B, C are blue, and D, E are split into red and blue.
}
\label{Sigma}
\end{figure}
%


%
It suffices 
to compute these matrix elements of 
$L$. 
Under the same notation in 
(\ref{decomposition}), we have 
$L = - F + G K^{-1} H = G K^{-1} H$. 
Hence 
\begin{equation}
L_{A, T_j}
=
(G K^{-1} H)_{A, T_j}
=
(K^{-1})_{a, t_j}, 
\quad
j = 1, 2, 3
\label{one}
\end{equation}
likewise for 
$L_{B, T_j}$. 
Similarly 
\begin{equation}
L_{C, T_i}
=
(G K^{-1} H)_{C, T_i}
=
\sum_{z \in \partial C}
(K^{-1})_{z, t_i}.
\label{two}
\end{equation}
By substituting (\ref{one}), (\ref{two})
into 
(\ref{zero}), 
we complete the proof of 
Theorem \ref{two impurities}.
\QED
%
%
\begin{remark}
If an impurity lies inside 
$G_1$, 
we generally have a non-planar partition 
$\tau$
with 
$Z(\tau) \ne 0$
so that 
the above argument is no longer valid.
\end{remark}
\subsection{Two impurity case II : impurities lying ``near" the boundary
}
In 
$G^{(2)}$, 
we consider the case in which both impurities satisfy 
$I^{(j)}_1 \in \partial G_1$, 
$j=1,2$, 
but 
$I^{(1)}_2 \notin \partial G_2$
(Figure \ref{Inner-Impurity}). 
By Theorem \ref{bijection}, 
there are two types of corresponding spanning trees : one is given in 
Figure \ref{A},  
whose contribution has been computed in Theorem \ref{two impurities}, 
and the other one in Figure \ref{B}.
As is explained in Figure \ref{One-Impurity}, 
the tree connecting 
$T_1$
and 
$T_2$
in Figure \ref{B} can be regarded as a composition of a TI-tree and  a TO-tree. 
In this subsection
we compute the contribution from the spanning trees in Figure \ref{B}.
Let 
$c \in G_1$
be the vertex connected to 
$I^{(1)}_2$
through a diagonal edge(Figure \ref{B}). 
%

\begin{figure}
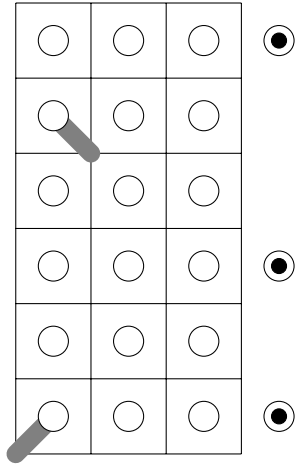

\begin{center}
\bet

\draw (0, 0) grid +(3, 6) ; 

\begin{scope} [ xshift = 0.5 cm, yshift = 0.5 cm ] 
{

\draw [ line cap = round, color = gray, line width = 7 ] (0, 4) --+(0.5, -0.5) ; 
\draw [ line cap = round, color = gray, line width = 7 ] (0, 0) --+(-0.5, -0.5) ;

\foreach \x in {0, 1, 2}
\foreach \y in {0, 1, 2, 3, 4, 5}
{
\draw [ fill = white ] (\x, \y) circle (0.2 cm) ; 
}


\foreach \x in {0, 2, 5}
{
\draw [ fill = white ] (3, \x) circle (0.2 cm) ;
\draw [ fill = black ] (3, \x) circle (0.1 cm) ; 
}

}
\end{scope}

\ent
\end{center}
\caption{
If 
$I^{(1)}_2 \notin \partial G_2$, 
we have extra spanning trees. 
}
\label{Inner-Impurity}
\end{figure}
%

\begin{figure}
\begin{center}
\bet
\draw (0, 0) grid +(3, 6) ; 

\begin{scope} [ xshift = 0.5 cm, yshift = 0.5 cm ] 
{
\draw [ line cap = round, color = red, line width = 5 ] (0, 4) --++ (2, 0) --++(0, 1) --++(1.5, 0) ; 
\draw [ line cap = round, color = blue, line width = 5 ] (-1, 3) --++(2, 0) --++(0, -1) --++(2, 0) ; 
\draw [ line cap = round, color = red, line width = 5 ] (0, 0)  --++(3.5, 0) ; 

\draw [ line cap = round, color = gray, line width = 7 ] (0, 4) --+(0.5, -0.5) ; 
\draw [ line cap = round, color = gray, line width = 7 ] (0, 0) --+(-0.5, -0.5) ;

\foreach \x in {0, 1, 2}
\foreach \y in {0, 1, 2, 3, 4, 5}
{
\draw [ fill = white ] (\x, \y) circle (0.2 cm) ; 
}


\foreach \x in {0, 2, 5}
{
\draw [ fill = white ] (3, \x) circle (0.2 cm) ; 
\draw [ fill = black ] (3, \x) circle (0.1 cm) ; 
}

}
\end{scope}

\ent
\end{center}
\caption{
A spanning trees contributing to $A(a,b)$. 
IO-trees are omitted. 
}
\label{A}
\end{figure}
%


\begin{figure}
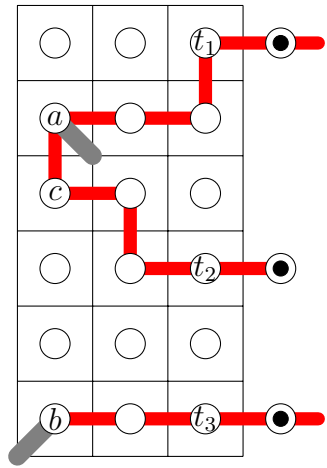

\begin{center}
\bet

\draw (0, 0) grid +(3, 6) ; 

\begin{scope} [ xshift = 0.5 cm, yshift = 0.5 cm ] 
{
\draw [ line cap = round, color = red, line width = 5 ] (3.5, 5) --++(-1.5, 0) --++(0, -1) --++(-2, 0) --++(0, -1) --++(1, 0) --++(0, -1) --++(2, 0) ; 
\draw [ line cap = round, color = red, line width = 5 ] 
(0, 0)  --++(3.5, 0) ; 

\draw [ line cap = round, color = gray, line width = 7 ] (0, 4) --+(0.5, -0.5) ; 
\draw [ line cap = round, color = gray, line width = 7 ] (0, 0) --+(-0.5, -0.5) ;

\foreach \x in {0, 1, 2}
\foreach \y in {0, 1, 2, 3, 4, 5}
{
\draw [ fill = white ] (\x, \y) circle (0.2 cm) ; 
}

\draw (0, 4) node {$a$} ; 
\draw (0, 3) node {$c$} ; 
\draw (0, 0 ) node {$b$} ; 

\foreach \x in {0, 2, 5}
{
\draw [ fill = white ] (3, \x) circle (0.2 cm) ; 
\draw [ fill = black ] (3, \x) circle (0.1 cm) ; 
}

\draw (2, 5) node {$t_1$} ; 
\draw (2, 2) node {$t_2$} ; 
\draw (2, 0) node {$t_3$} ; 

}
\end{scope}

\ent
\end{center}
\caption{
A spanning trees contributing to $B(a,b)$. 
IO-trees are omitted. 
}
\label{B}
\end{figure}
%


%
\begin{theorem}
\label{inner}
Let 
$a, b \in \partial G_1$.
The number 
$M(a,b)$ 
of perfect matchings satisfying 
$I^{(1)}_1=a$, 
$I^{(2)}_1=b$
and 
$I^{(1)}_2 \notin \partial G_2$, 
$I^{(2)}_2 \in \partial G_2$
is equal to
\beq
M(a,b) &=& A(a, b) + B(a,b)
\\
B(a,b) &:=& 
\left|
\det
\left( \begin{array}{ccc}
L_{a, t_1} & L_{a, t_2} & L_{a, t_3} \\
L_{b, t_1} & L_{b, t_2} & L_{b, t_3} \\
L_{c, t_1} & L_{c, t_2} & L_{c, t_3} \\
\end{array} \right)
\cdot
\det (K)
\right|
\eeq
where 
$A(a,b)$
is the one given in 
Theorem \ref{two impurities}
and 
\[
L_{x,y} :=(K^{-1})(x,y).
\]
\end{theorem}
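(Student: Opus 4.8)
The plan is to reproduce, for the new tree type, the Kenyon--Wilson argument already used for Theorem \ref{two impurities}. By Theorem \ref{bijection} and the discussion preceding the statement, the matchings with $I^{(1)}_1=a$, $I^{(2)}_1=b$, $I^{(2)}_2\in\partial G_2$ and $I^{(1)}_2\notin\partial G_2$ split according to whether the associated spanning tree of $G_{1,T,R}$ has the shape of Figure \ref{A} or of Figure \ref{B}. The first family is combinatorially identical to the one counted in Theorem \ref{two impurities} and therefore contributes $A(a,b)$, so it remains only to count the Type~B trees of Figure \ref{B} and to identify their number with $B(a,b)$. Exactly as in (\ref{forest}), I would first rephrase this as a count of spanning forests of $G_1$: the Type~B trees correspond to forests in which $b$ is joined to $t_3$ while $a$, $c$, $t_1$, $t_2$ all lie in a single tree, the $TI+TO$-tree.

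To evaluate this count I would build a circular planar graph $G_C$ as in the proof of Theorem \ref{two impurities} — adjoining the boundary vertices of $G_1$ and identifying them into the nodes $A$, $B$ attached to $a$, $b$ and the auxiliary isolated nodes, with $T_1,T_2,T_3$ as further nodes — and in addition promote $c$ to a node $C'$ by attaching a fresh vertex to it, just as the node $X$ was attached to $x$ in the proof of Lemma \ref{one impurity1}. The claim I would then establish is that the Type~B trees are in bijection with the groves on $G_C$ realizing the planar partition $\sigma_B$ given by $A\leftrightarrow T_1$, $C'\leftrightarrow T_2$, $B\leftrightarrow T_3$, all remaining added nodes being isolated. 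Reading off the cyclic order of the nodes around $\partial G_C$ — the $T_i$ on one arc, $A,C',B$ on the complementary arc — this $\sigma_B$ is bipartite in the sense of Theorem \ref{bipartite}, realized by the nested non-crossing matching, and since the relevant node set is too small to admit a genuinely non-planar partition, hypothesis (A) holds.

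Granting the bijection, Theorem \ref{bipartite} gives $Z_{G_C}(\sigma_B)=|\det L_{R,B}\,\det(K_c)|$, and the incidence computation $L=GK_c^{-1}H$ used in proving Theorem \ref{two impurities} evaluates the nine entries as $L_{A,T_j}=(K_c^{-1})(a,t_j)$, $L_{C',T_j}=(K_c^{-1})(c,t_j)$ and $L_{B,T_j}=(K_c^{-1})(b,t_j)$, since $a,c,b$ are the only interior vertices adjacent to $A,C',B$ and $t_j$ the only one adjacent to $T_j$. To pass from $K_c$ back to $K$ I would invoke the matrix form of the cancellation in the proof of Theorem \ref{one impurity}: by the resolvent identity (\ref{resolvent}) each entry obeys $(K_c^{-1})(x,t_j)=(K^{-1})(x,t_j)-(K_c^{-1})(x,c)\,(K^{-1})(c,t_j)$, so the correction to the three rows is rank one along the common vector $\big((K^{-1})(c,t_j)\big)_j$. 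Because the $c$-row of the submatrix is itself proportional to that vector, row operations reduce $\det L_{R,B}$ to $\det\big[(K^{-1})(x,t_j)\big]/(1+(K^{-1})(c,c))$, which cancels against $\det(K_c)=\det(K)\,(1+(K^{-1})(c,c))$ and reproduces the determinant $B(a,b)$ written with $K^{-1}$ and $\det(K)$.

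The step I expect to be the main obstacle is the bijection asserted in the second paragraph. Note that $\sigma_B$ places $a$ and $c$ in \emph{separate} parts $\{a,t_1\}$ and $\{c,t_2\}$, whereas the Type~B tree of Figure \ref{B} joins $a$, $c$, $t_1$, $t_2$ into a single tree; the partition carrying that four-element block is \emph{not} bipartite, and Theorem \ref{bipartite} cannot be applied to it directly. The resolution I would argue is that the interior impurity at $I^{(1)}_2$ is precisely the edge gluing the $a$-tree to the $c$-tree inside $G^{(2)}$: cutting at this fixed, weight-one edge turns the combined $TI+TO$-tree into the two non-crossing trees recorded by $\sigma_B$, so that the number of Type~B matchings equals the number of $\sigma_B$-groves on $G_C$. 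Making this cut-and-reglue correspondence precise, and verifying that it never forces a crossing (non-planar) partition that would violate hypothesis (A), is where the real work lies; once it is in place, the determinantal evaluation and the reduction $K_c\to K$ are routine.
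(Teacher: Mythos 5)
Your proposal follows essentially the same route as the paper: decompose $M(a,b)=A(a,b)+B(a,b)$ according to the two tree types, realize $B(a,b)$ as the weighted count of groves for the bipartite partition $A\leftrightarrow T_1$, $C\leftrightarrow T_2$, $B\leftrightarrow T_3$ on a circular planar graph $G_C$, and apply Theorem \ref{bipartite}. The only divergence is that the paper takes the node $C$ to be the boundary vertex already attached to $c\in\partial G_1$ in the construction of $G^{(2)}$, so the interior Laplacian is $K$ itself and your $K_c\to K$ cancellation (which you carry out correctly) is unnecessary; the tree-to-grove correspondence that you flag as the main remaining gap is likewise asserted without further detail in the paper's own proof.
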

It is also possible
to deal with the case where 
$I_2^{(j)} \notin \partial G_2$ 
$j=1,2$, 
with some extra terms computed similarly. 
\begin{proof}
We compute 
$B(a,b)$
which is the number of the spanning trees on 
$G_{1, T, R}$ 
corresponding to the ones in Figure \ref{B}.
As is done in subsection 3.3, 
we add all the boundary vertices to 
$G_1$, 
and let 
$A, B, C$
be the boundary vertices connected to 
$a,b, c$
respectively. 
We identify 
all boundary vertices between 
$C$
and
$B$,
which we call 
$D$(Figure \ref{GCB}). 
Similarly, 
we define vertices 
$E, F, G, H$.
Let 
\[
V(G_C) := I \cup N, 
\quad
I = V(G_1), 
\quad
N := \{ T_1, T_2, T_3, A, B,C, D, E, F, G, H \}.
\]
$B(a,b)$
is equal to the number of groves such that 
$T_1 \leftrightarrow A$, 
$T_2 \leftrightarrow C$, 
$T_3 \leftrightarrow B$, 
and the other nodes are isolated. 
Since 
the corresponding  
$\sigma \in {\cal P}_{pl}$
is bipartite, we can apply 
Theorem \ref{bipartite}. 
\QED
\end{proof}
%

\begin{figure}
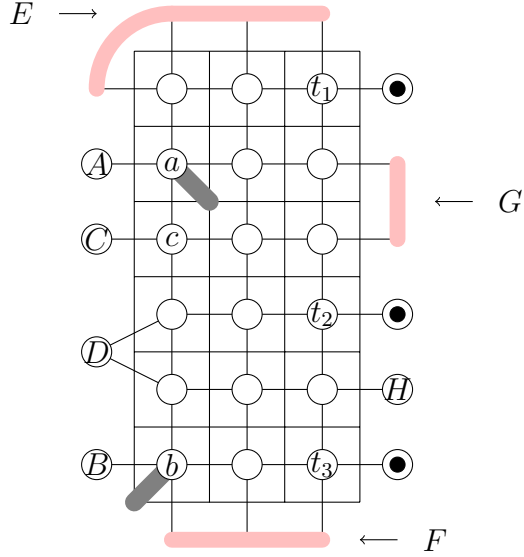

\begin{center}
\bet

\draw (0, 0) grid +(3, 6) ; 

\begin{scope} [ xshift = 0.5 cm, yshift = 0.5 cm ] 
{

\draw [ line cap = round, color = gray, line width = 7 ] (0, 4) --+(0.5, -0.5) ; 
\draw [ line cap = round, color = gray, line width = 7 ] (0, 0) --+(-0.5, -0.5) ; 

\foreach \y in {0,  3, 4, 5}
{
\draw (-1, \y) --+(4, 0) ;
}
\foreach \y in {1, 2}
{
\draw (0, \y) --+(3, 0) ; 
}
\foreach \x in {0, 1, 2}
{ 
\draw (\x, -1) --+(0, 7) ; 
}
\draw (0, 1) --+(-1, 0.5) ; 
\draw (0, 2) --+(-1, -0.5) ; 

\foreach \x in {0, 1, 2}
\foreach \y in {0, 1, 2, 3, 4, 5}
{
\draw [ fill = white ] (\x, \y) circle (0.2 cm) ; 
}

\foreach \y in {0, 1.5, 3, 4}
{
\draw [ fill = white ] (-1, \y) circle (0.2cm) ; 
}

\foreach \x in {0, 2, 5}
{
\draw [ fill = white ] (3, \x) circle (0.2 cm) ; 
\draw [ fill = black ] (3, \x) circle (0.1 cm) ; 
}

\draw (0, 4) node {$a$} ; 
\draw (0, 3) node {$c$} ; 
\draw (0, 0 ) node {$b$} ; 

\draw (-1, 4) node {$A$} ; 
\draw (-1, 3) node {$C$} ; 
\draw (-1, 0 ) node {$B$} ; 
\draw (-1, 1.5 ) node {$D$} ; 

\draw (2, 5) node {$t_1$} ; 
\draw (2, 2) node {$t_2$} ; 
\draw (2, 0) node {$t_3$} ;

\draw [ line cap = round, color = pink, line width = 6 ] (2, 6) --+(-2, 0) ; 
\draw [ line cap = round, color = pink, line width = 6 ] (0, 6) arc (90 : 180 : 1) ; 
\draw [ <- ] (-1, 6) --+(-0.5, 0) ; 
\draw (-2, 6) node {$E$} ; 

\draw [ line cap = round, color = pink, line width = 6 ] (0, -1) --+(2, 0) ; 
\draw [ -> ] (3, -1) --+(-0.5, 0) ; 
\draw (3.5, -1) node {$F$} ;

\draw [ line cap = round, color = pink, line width = 6 ] (3, 3) --+(0, 1) ; 
\draw [ <- ] (3.5, 3.5) --+(0.5, 0) ; 
\draw (4.5, 3.5) node {$G$} ; 

\draw [ fill = white ] (3, 1) circle (0.2 cm) ; 
\draw (3,1) node {$H$} ; 

}
\end{scope}

\ent
\end{center}
\caption{
$G_C$
to compute 
$B(a,b)$. 
}
\label{GCB}
\end{figure}
%


%
%
\subsection{More impurities case}
It is possible to extend our argument in 
Section 3.3 to the case where the number of impurities satisfy
$k \ge 3$, 
provided all impurities lie on the boundary. 
Let 
$(I^{(1)}_1, I^{(2)}_1, \cdots, I^{(k)}_1)
=
(a_1, a_2, \cdots, a_k)$, 
$a_j \in \partial G_1$, 
$j=1, 2, \cdots, k$
be the position of impurities.
Let 
$\partial C_j$, $j=1, 2, \cdots, k-1$
be the set of vertices in 
$\partial G_1$
lying between 
$a_j$
and 
$a_{j+1}$. 
\begin{theorem}
\label{many impurities}
Let 
$k \ge 2$.
If all impurities lie on the boundary and satisfy
$I^{(j)}_1=a_j$, 
$a_j \in \partial G_1$, 
$I^{(j)}_2 \in \partial G_2$, 
$j=1, 2, \cdots, k$, 
then the number of corresponding perfect matchings is equal to 
\beq
M(a_1, a_2, \cdots, a_k)
=
\left|
\det 
\left( \begin{array}{cccc}
L_{a_1, t_1} & L_{a_1, t_2} & \cdots & L_{a_1, t_{2k-1}} 
\\
L_{a_2, t_1} & L_{a_2, t_2} & \cdots & L_{a_2, t_{2k-1}} 
\\
\cdots &&&\\
L_{a_k, t_1} & L_{a_k, t_2} & \cdots & L_{a_k, t_{2k-1}} 
\\
L_{\partial C_1, t_1} & L_{\partial C_1, t_2} & \cdots & L_{\partial C_1, t_{2k-1}} 
\\
\cdots &  &  & \\
L_{\partial C_{k-1}, t_1} & L_{\partial C_{k-1}, t_2} & \cdots & L_{\partial C_{k-1}, t_{2k-1}} 
\end{array} \right)
\cdot
\det (K)
\right|.
\eeq
\end{theorem}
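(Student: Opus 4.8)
The plan is to reproduce the Kenyon--Wilson argument of Section 3.3 with $2k-1$ active nodes in place of three. First I would apply Theorem \ref{bijection} together with the reduction leading to (\ref{forest}): a matching counted by $M(a_1,a_2,\cdots,a_k)$ corresponds to a spanning tree of $G_{1,T,R}$ built from $k$ TI-trees joining the $a_j$ to terminals, $(k-1)$ TO-trees joining the regions $\partial C_j$ to terminals, and IO-trees elsewhere. Deleting the edges to $R$ yields a spanning forest of $G_1$ in which each $a_j$ and each $\partial C_j$ is connected to a distinct $t_i$. Since $a_1,\partial C_1,a_2,\cdots,\partial C_{k-1},a_k$ occur consecutively along one part of $\partial G_1$ while $t_1,\cdots,t_{2k-1}$ occur consecutively along the opposite part, the cyclic order admits a unique non-crossing assignment of the $2k-1$ objects to the $2k-1$ terminals, so $M(a_1,\cdots,a_k)=Z_{G_C}(\sigma)$ for one explicit planar partition $\sigma$.

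I would then construct the circular planar graph $G_C$ exactly as in Figure \ref{GC-Two-Impurities}: adjoin all boundary vertices of $G_1$, identify those sitting on each $a_j$ into a node $A_j$, identify those in each intermediate region $\partial C_j$ into a node $C_j$, and identify the remaining boundary vertices of each residual arc into isolated ``dummy'' nodes. With $I=V(G_1)$ and $N$ equal to the terminals together with the $A_j$, the $C_j$ and the dummy nodes, the partition $\sigma$ pairs each $A_j$ and each $C_j$ with its terminal and isolates every dummy node.

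The two hypotheses of Theorem \ref{bipartite} must then be verified, and this is precisely where the assumption $a_j\in\partial G_1$ is used. Each node of $N$ is obtained by collapsing a \emph{contiguous} arc of $\partial G_1$, so the nodes inherit a well-defined cyclic order; since the trees of a spanning forest of the planar graph $G_1$ are pairwise vertex-disjoint and hence non-crossing, every grove of $G_C$ induces a non-crossing partition of $N$, whence any crossing partition $\tau$ has $Z_{G_C}(\tau)=0$ and condition \textbf{(A)} holds. For bipartiteness I would colour the contiguous blue arc $A_1,C_1,\cdots,A_k$ and the contiguous red arc $t_1,\cdots,t_{2k-1}$, splitting those dummy nodes that lie between the two monochromatic arcs so that the colouring stays contiguous; each non-singleton block of $\sigma$ is then a size-two blue--red pair, giving (i)--(iii).

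It then remains to evaluate the response matrix as in (\ref{one})--(\ref{two}). Since the off-diagonal part of $F$ vanishes and each terminal $T_i$ is joined to the single interior vertex $t_i$, the identity $L=GK^{-1}H$ gives $L_{A_j,T_i}=(K^{-1})(a_j,t_i)=L_{a_j,t_i}$ and $L_{C_j,T_i}=\sum_{y\in\partial C_j}(K^{-1})(y,t_i)=L_{\partial C_j,t_i}$, and substituting into $Z_{G_C}(\sigma)=|\det L_{R,B}\cdot\det(K)|$ reproduces the stated $(2k-1)\times(2k-1)$ determinant, with all sign ambiguity absorbed into the absolute value. The main obstacle is not this final linear algebra, which is identical to the $k=2$ case, but the combinatorial core: showing that for impurities strictly on the boundary the partition $\sigma$ is genuinely bipartite and that condition \textbf{(A)} survives the vertex identifications, both resting on the fact that the alternating sequence of impurities and intermediate arcs admits exactly one non-crossing matching to the terminals.
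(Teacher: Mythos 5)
Your proposal is correct and follows exactly the route the paper intends: the paper gives no separate proof of Theorem \ref{many impurities}, stating only that the Kenyon--Wilson argument of Section 3.3 extends to $k\ge 3$ when all impurities lie on the boundary, and your construction of $G_C$ with $2k-1$ active nodes, the verification of condition \textbf{(A)} and bipartiteness, and the evaluation of the response matrix entries are precisely that extension. In fact you supply more detail (the uniqueness of the non-crossing assignment and the colouring of the dummy nodes) than the paper itself records.
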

We also have the analogue of 
Theorem \ref{inner}. 
%
\subsection{One-dimension case}
When 
$G_1$
is the one-dimensional chain, the impurities always lie on the boundary. 
Hence 
we can compute the number of perfect matchings for all configuration of impurities, by using the methods discussed in Sections 3.3, 3.4. 
For instance, 
we consider the case described in Figure \ref{1Dim-1}. 
There are  
the two types spanning trees on 
$G_{1, T, R}$ 
given in 
Figures \ref{1Dim-A}, \ref{1Dim-C}. 
For each case 
we construct the circular planar graph
$G_C$
as we did in Sections 3.3, 3.4. 
The number of spanning trees in Figure \ref{1Dim-A} is equal to 
$A(a,b)$ 
given in Section 3.3. 
The number of spanning trees in Figure \ref{1Dim-C} is equal to 
$C(a,b)$, 
which is the number of groves satisfying 
$T_1 \leftrightarrow T_2$, 
$T_3 \leftrightarrow b$, 
and the other nodes are isolated. 
%

\begin{figure}
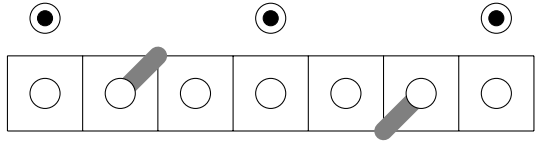

\begin{center}
\bet

\draw (0, 0) grid (7, 1) ; 

\begin{scope} [ xshift = 0.5 cm, yshift = 0.5 cm] 
{
\draw [ line cap = round, color = gray, line width = 7 ] (1, 0) --+(0.5, 0.5) ; 
\draw [ line cap = round, color = gray, line width = 7 ] (5, 0) --+(-0.5, -0.5) ; 

\foreach \x in {0, 1, 2, 3, 4, 5, 6}
{
\draw [ fill = white ] (\x, 0) circle ( 0.2 cm ) ; 
}

\foreach \x in {0, 3, 6}
{
\draw [ fill = white ] (\x, 1) circle ( 0.2 cm ) ; 
\draw [ fill = black ] (\x, 1) circle ( 0.1 cm ) ; 
}

}
\end{scope}

\ent
\end{center}
\caption{
An example impurity configuration in one-dimensional chain
}
\label{1Dim-1}
\end{figure}
%

\begin{figure}
\begin{center}
\bet

\draw (0, 0) grid (7, 1) ; 

\begin{scope} [ xshift = 0.5 cm, yshift = 0.5 cm] 
{
\draw [ line cap = round, color = red, line width = 5 ] (0, 1.5) --++(0, -1.5) --++(1, 0) ; 
\draw [ line cap = round, color = blue, line width = 5 ] (3, 1) --++(0, -1) --++(-1, 0) --++(0, -1) ; 
\draw [ line cap = round, color = red, line width = 5 ] (6, 1.5) --++(0, -1.5) --++(-2, 0) ;

\draw [ line cap = round, color = gray, line width = 7 ] (1, 0) --+(0.5, 0.5) ; 
\draw [ line cap = round, color = gray, line width = 7 ] (5, 0) --+(-0.5, -0.5) ; 

\foreach \x in {0, 1, 2, 3, 4, 5, 6}
{
\draw [ fill = white ] (\x, 0) circle ( 0.2 cm ) ; 
}

\foreach \x in {0, 3, 6}
{
\draw [ fill = white ] (\x, 1) circle ( 0.2 cm ) ; 
\draw [ fill = black ] (\x, 1) circle ( 0.1 cm ) ; 
}

}
\end{scope}

\ent
\end{center}
\caption{
A spanning tree contributing to $A(a,b)$
}
\label{1Dim-A}
\end{figure}
%

\begin{figure}
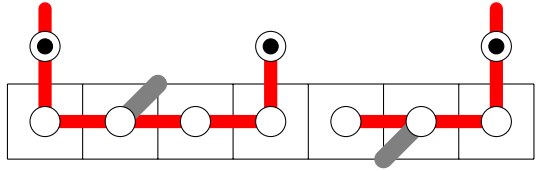

\begin{center}
\bet

\draw (0, 0) grid (7, 1) ; 

\begin{scope} [ xshift = 0.5 cm, yshift = 0.5 cm] 
{
\draw [ line cap = round, color = red, line width = 5 ] (0, 1.5) --++(0, -1.5) --++(3, 0) --++(0, 1) ; 
\draw [ line cap = round, color = red, line width = 5 ] (6, 1.5) --++(0, -1.5) --++(-2, 0) ;

\draw [ line cap = round, color = gray, line width = 7 ] (1, 0) --+(0.5, 0.5) ; 
\draw [ line cap = round, color = gray, line width = 7 ] (5, 0) --+(-0.5, -0.5) ; 

\foreach \x in {0, 1, 2, 3, 4, 5, 6}
{
\draw [ fill = white ] (\x, 0) circle ( 0.2 cm ) ; 
}

\foreach \x in {0, 3, 6}
{
\draw [ fill = white ] (\x, 1) circle ( 0.2 cm ) ; 
\draw [ fill = black ] (\x, 1) circle ( 0.1 cm ) ; 
}

}
\end{scope}

\ent
\end{center}
\caption{
A spanning tree contributing to $C(a,b)$
}
\label{1Dim-C}
\end{figure}
%

\section{Large scale limit}
In this section
we go back to the one impurity case and prove Theorems \ref{problem1}, \ref{problem2}.
We take 
$G_1$
to be the 
$n \times m$
rectangle : 
\[
G_1 = G_1^{(n, m)} :=
\{ (i, j) \, | \, 
1 \le i \le n, \; 1 \le j \le m \}.
\]
The terminal 
$T$
is attached to the vertex 
$(1,1)$. 
Let 
$l_T$
be the length of the TI-tree of the spanning tree 
$T$ 
given in Theorem \ref{bijection}, and let 
\begin{eqnarray*}
A^{(n,m)} 
&:=& (\triangle_{G_{1, T}} |_{G_1})^{-1}. 
\end{eqnarray*}
By 
Theorem \ref{one impurity}, 
\begin{eqnarray}
(1)&&\;
{\bf P}(
I_1 = (x,y)
)
=
\frac {
A^{(n,m)}( (x,y), (1,1) ) 
}
{
4 \sum_{x'=1}^n \sum_{y'=1}^m
A_{n, m}( (x',y'), (1,1) ) + 2
}
\label{probability}
\\
(2)&&\;
{\bf E}[l_T]
=\sum_{x'=1}^n \sum_{y'=1}^m
A^{(n,m)}( (x',y'), (1,1) ).
\label{length}
\end{eqnarray}
We would like to study the behavior of 
${\bf P}(I_1=(x,y))$
and
${\bf E}[l_T]$
as 
$n=m$ 
tends to infinity. 
\begin{proposition}
\label{G}
Let 
$n = m$.\\
(1)
For fixed 
$(x, y) \in G_1^{(n,m)}$, 
\beq
&&
\lim_{n \to \infty} A^{(n,n)}( (x,y), (1,1))
=
A(x,y)
\\
&&
A(x,y)
:=
\frac {2}{\pi^2}
\int \int_{(0, \pi)^2}
\frac {\sin (\theta x) \sin (\phi y) \sin \theta \sin \phi}
{2 -  \cos \theta -  \cos \phi}
d \theta d \phi.
\eeq
(2)
Under the uniform distribution on the spanning trees, 
\[
{\bf E}[l_T]
=
\sum_{(x,y) \in G_1}
A^{(n,n)}( (x,y), (1,1))
=
\frac {2}{\pi} \log n (1 + o(1)).
\]
as 
$n \to \infty$. 
\end{proposition}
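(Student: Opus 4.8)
The plan is to use the explicit spectral decomposition of $K=\triangle_{G_{1,T}}|_{G_1}$. By the discussion in Section 2.2 and Figure \ref{G1-Bar}, grounding the boundary through the root makes every vertex of $G_1$ have degree four, so $K=4I-A$ with $A$ the adjacency matrix of the $n\times n$ grid. The Dirichlet eigenfunctions
\[
\psi_{a,b}(x,y)=\sin\frac{\pi a x}{n+1}\sin\frac{\pi b y}{n+1},\qquad 1\le a,b\le n,
\]
diagonalize $K$ with eigenvalues $\lambda_{a,b}=2\bigl(2-\cos\frac{\pi a}{n+1}-\cos\frac{\pi b}{n+1}\bigr)$, and $\sum_{x=1}^{n}\sin^2\frac{\pi a x}{n+1}=\frac{n+1}{2}$ fixes the normalisation.

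For (1), inserting this decomposition gives
\beq
A^{(n,n)}((x,y),(1,1))
&=&
\frac{4}{(n+1)^2}
\sum_{a,b=1}^{n}
\frac{\sin\frac{\pi a x}{n+1}\sin\frac{\pi b y}{n+1}\sin\frac{\pi a}{n+1}\sin\frac{\pi b}{n+1}}
{2\bigl(2-\cos\frac{\pi a}{n+1}-\cos\frac{\pi b}{n+1}\bigr)}.
\eeq
With $\theta=\frac{\pi a}{n+1}$, $\phi=\frac{\pi b}{n+1}$ and mesh $\frac{\pi}{n+1}$ this is $\frac{4}{\pi^2}$ times a Riemann sum of the integrand defining $A(x,y)$. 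The only delicate point is the apparent singularity at $(\theta,\phi)=(0,0)$: there $2-\cos\theta-\cos\phi\sim\frac12(\theta^2+\phi^2)$ while the numerator is $O(\theta^2\phi^2)$, so the integrand extends continuously by the value $0$ and is bounded on $[0,\pi]^2$. Hence the Riemann sums converge and the limit is exactly $A(x,y)$.

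For (2) I would perform the sum over $(x,y)$ before taking $n\to\infty$. Using
\[
\sum_{x=1}^{n}\sin\frac{\pi a x}{n+1}=\cot\frac{\pi a}{2(n+1)}\ \ (a\ \mbox{odd}),\qquad
\sum_{x=1}^{n}\sin\frac{\pi a x}{n+1}=0\ \ (a\ \mbox{even}),
\]
together with $\sin\frac{\pi a}{n+1}\cot\frac{\pi a}{2(n+1)}=1+\cos\frac{\pi a}{n+1}=2\cos^2 u_a$ and $2-\cos\frac{\pi a}{n+1}-\cos\frac{\pi b}{n+1}=2\sin^2 u_a+2\sin^2 u_b$, where $u_a:=\frac{\pi a}{2(n+1)}$, the double sum collapses onto odd indices and becomes
\beq
{\bf E}[l_T]
&=&
\frac{4}{\pi^2}
\sum_{a,b\ \mathrm{odd}}
h(u_a,u_b)\,(\Delta u)^2,
\qquad
h(u,v):=\frac{\cos^2 u\,\cos^2 v}{\sin^2 u+\sin^2 v},
\eeq
with $\Delta u=\frac{\pi}{n+1}$ the spacing of the $u_a$. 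Near the origin $h(u,v)=\frac{1}{u^2+v^2}+O(1)$ with a bounded, continuous remainder, so the part of the sum carried by $h-(u^2+v^2)^{-1}$ is a genuine convergent Riemann sum and contributes $O(1)$. The singular part reduces to the lattice sum $\frac{16}{\pi^2}\sum_{a,b\ \mathrm{odd},\,1\le a,b\le n}\frac{1}{a^2+b^2}$, and comparing with $\int\!\!\int r^{-2}\,r\,dr\,d\psi$ over a quarter-plane, with the factor $\frac14$ for the density of odd lattice points, gives $\sum_{a,b\ \mathrm{odd}}\frac{1}{a^2+b^2}=\frac{\pi}{8}\log n+O(1)$. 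Multiplying through yields ${\bf E}[l_T]=\frac{2}{\pi}\log n+O(1)$, which is the claim; the one-dimensional estimate quoted in Theorem \ref{problem1}(1) comes from the same computation with a single sine factor, where the relevant geometric sum stays bounded in $n$.

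The step I expect to be the main obstacle is pinning down the constant $\frac{2}{\pi}$ in (2). One must handle the non-integrable singularity of $h$ at the origin, cleanly separating the logarithmically divergent contribution, which accumulates uniformly over all scales $1\ll a,b\ll n$, from the bounded bulk, and then extract the coefficient of $\log n$ in the lattice sum with error terms genuinely $O(1)$ rather than accidentally $o(\log n)$. By contrast, part (1) and the algebraic reduction in (2) are routine once the spectral decomposition and the cotangent summation identity are in hand.
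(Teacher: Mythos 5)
Your proposal is correct and follows essentially the same route as the paper: the Dirichlet spectral decomposition of $K=4I-A$ and a Riemann-sum limit with the removable singularity at the origin for (1), and for (2) the same collapse onto odd indices via the sine geometric sum followed by isolation of the $(u^2+v^2)^{-1}$ singularity and a polar-coordinate evaluation of the logarithmic divergence. The only (cosmetic) difference is that you estimate the divergent part through the arithmetic lattice sum $\sum_{a,b\ \mathrm{odd}}(a^2+b^2)^{-1}$, whereas the paper sandwiches the sum between truncated continuum integrals $\int_{1/n}^{\pi}\int_{1/n}^{\pi}$; both yield the coefficient $\tfrac{2}{\pi}$ with $O(1)$ errors.
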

\begin{proof}
(1)
The eigenvalues and the corresponding normalized eigenvectors of 
$A^{(n,m)}$
are given by 
\beq
e_{kl} &:=& 4 - 2 \cos 
\left(
\frac {k \pi}{n+1}
\right) - 2 \cos 
\left(
\frac {l \pi}{m+1}
\right), 
\\
\psi_{kl}(x,y) 
&:=&
\frac {2}{\sqrt{(n+1)(m+1)}}
\sin 
\left(
\frac {k \pi}{n+1} x
\right)
\sin 
\left(
\frac {l \pi}{m+1} y
\right), 
\eeq
$(k = 1, 2, \cdots, n, 
\;
l = 1, 2, \cdots, m)$. 
Hence
\begin{eqnarray*}
&&
A^{(n,m)}((x,y), (1,1))
\\
&=&
\frac {4}{(n+1)(m+1)}
\sum_{k=1}^n \sum_{l=1}^m
\frac {1}{e_{kl}}
\times
\\
&&
\times
\sin \left(\frac {k \pi}{n+1}x \right)
\sin \left(\frac {l \pi}{m+1}y\right)
\sin \left(\frac {k \pi}{n+1}\right)
\sin \left(\frac {l \pi}{m+1}\right)
\\
&\stackrel{n=m \to \infty}{\to}&
A(x,y).
\end{eqnarray*}
Here 
we note that the behavior of the integrand near the singularity 
$(\theta, \phi) = (0,0)$
is 
\begin{eqnarray*}
\frac {\sin (\theta x) \sin (\phi y) \sin \theta \sin \phi}
{4 - 2 \cos \theta - 2 \cos \phi}
&=&
\frac {\theta \cdot \phi}{\theta^2 + \phi^2}
\sin (\theta x) \cdot \sin (\phi y)
+
O(|\theta| + | \phi |)
\end{eqnarray*}
so that it is removable.
\\
%
(2)
The sum of 
$A^{(n,m)}((x,y), (1,1))$
is equal to 
\begin{eqnarray*}
&&
\sum_{x,y} A^{(n,m)}( (x,y), (1,1))
\\
&=&
\frac {4}{(n+1)(m+1)}
\sum_{k,\, l \mbox{:odd}}
\frac {1}{e_{kl}}
\cdot
\frac{
\sin^2 \left(\frac {k \pi}{n+1}\right)
\sin^2 \left(\frac {l \pi}{m+1} \right)
}
{
\left(
1 - \cos \frac {k \pi}{n+1}
\right)
\left(
1 - \cos \frac {l \pi}{m+1}
\right)
}.
\end{eqnarray*}
In what follows, we let 
$m=n$.
Due to the monotonicity of the integrand near the origin, we have, for a positive constant
$C$, 
\begin{eqnarray*}
&&\sum_{x,y} A_{n,n}( (x,y), (1,1))
\\
&&
\ge
\frac {1}{\pi^2}
\int_{\frac 1n}^{\pi} \int_{\frac 1n}^{\pi}
\frac {\sin^2 \theta \sin^2 \phi}
{(1 - \cos \theta)(1 - \cos \phi)}
\cdot
\frac {d \theta d \phi}
{4 - 2 \cos \theta - 2 \cos \phi}
-C.
\end{eqnarray*}
Since the integrand behaves 
\[
\frac {\theta^2 \phi^2}{\frac {\theta^2}{2} \cdot 
\frac {\phi^2}{2}}
\cdot
\frac {1}{\theta^2 + \phi^2}
=
\frac {4}{\theta^2 + \phi^2}
\]
near the origin, we have as 
$n \to \infty$ 
\begin{eqnarray*}
&&
\frac {1}{\pi^2}
\int_{\frac 1n}^{\pi} \int_{\frac 1n}^{\pi}
\frac {\sin^2 \theta \sin^2 \phi}
{(1 - \cos \theta)(1 - \cos \phi)}
\cdot
\frac {d \theta d \phi}
{4 - 2 \cos \theta - 2 \cos \phi}
\\
&&=
\frac {1}{\pi^2}
\int_{\frac 1n}^{\pi} dr \;
\frac 4r
\int_0^{\frac {\pi}{2}}
d \theta
(1 + o(1))
\\
&&
=
\frac {2}{\pi} \log n (1 + o(1)).
\end{eqnarray*}
On the other hand, 
\begin{eqnarray*}
&&\sum_{xy} G(x,y)
\le
\frac {1}{\pi^2}
\int_{\frac 1n}^{\pi} \int_{\frac 1n}^{\pi}
\frac {\sin^2 \theta \sin^2 \phi}
{(1 - \cos \theta)(1 - \cos \phi)}
\cdot
\frac {d \theta d \phi}
{4 - 2 \cos \theta - 2 \cos \phi}
\\
&&+
\frac {4}{(n+1)^2}
\sum_{l=1}^{\left[\frac {m+1}{2} \right]}
\frac {
\sin^2 \frac {\pi}{n+1} \sin^2 \frac {(2l-1)\pi}{m+1}
}
{
\left(
4 - 2 \cos \frac {\pi}{n+1} 
-2 \cos \frac {(2l-1)\pi}{m+1}
\right)
}
\frac {1}{
(1 - \cos \frac {\pi}{n+1})(1 - \cos \frac {(2l-1)\pi}{m+1})}
\\
&& +(n \leftrightarrow m,\; k \leftrightarrow l)
\\
&=:& I + II + III.
\end{eqnarray*}
The second term is estimated as  
\begin{eqnarray*}
II
&=&
\frac {4}{(n+1)^2}
\sum_l 
\frac {
(1 + \cos \frac {\pi}{n+1})(1 + \cos \frac {(2l-1)\pi}{m+1})
}
{4 - 2 \cos \frac {\pi}{n+1} - 2 \cos \frac {(2l-1)\pi}{m+1}}
\\
&\le&
(Const.)
\frac {4}{(n+1)^2}
\sum_l
\frac {(n+1)^2}{\pi^2 + (2l-1)^2 \pi^2}
=
O(1)
\end{eqnarray*}
and similarly for 
$III$.
Therefore 
the upper and lower bounds have the same leading order yielding 
\[
{\bf E}[l_T]
=
\sum_{x,y}A_{n,n}( (x,y), (1,1))
=
\frac {2}{\pi} \log n (1 + o(1)).
\]
\QED
\end{proof}
{\it Proof of Theorem \ref{problem1}}\\
(1) follows from 
\cite{NS1}, Example 3.6, and 
(2) follows from 
eq.(\ref{probability}) 
and 
Proposition \ref{G}(1), (2).
\QED
\\

\noindent
{\it Proof of Theorem \ref{problem2}}\\
Let 
$I_1 = {\bf r} \in G_1$
be the position of the impurity and take any 
$c > 0$.
By Theorem \ref{bijection}
if 
$| {\bf r} | \ge c L$
then 
$l_T \ge c L$
so that by Chebychev's inequality 
\beq
{\bf P}\left(
| {\bf r} | \ge c n
\right)
\le
{\bf P} \left(
l_T \ge c n
\right)
\le
\frac {1}{cn}
{\bf E}[ l_T ].
\eeq
For one-dimension, 
${\bf E}[ l_T ] < \infty$ 
by 
Theorem  \ref{problem1}(1)
and for two-dimension 
${\bf E}[ l_T ] = (const.) \log n (1+o(1))$, 
$n \to \infty$
by Proposition \ref{G}(2).
Thus
$\lim_{n \to \infty}
{\bf P}(| {\bf r} | \ge cn) = 0$. 
\QED

\vspace*{1em}
\noindent {\bf Acknowledgement }
This work is partially supported by 
JSPS grant Kiban-C no.22540140(F.N.) and no.23540155(T.S.).



\begin{thebibliography}{99}
%
\bibitem{Ciucu1}
Ciucu, M. : 
A random tiling model for two dimensional electrostatics. Mem. Amer. Math. Soc. {\bf 178}(2005), no. 839, p.1-106. 
%
\bibitem{Ciucu2}
Ciucu, M. : 
The scaling limit of the correlation of holes on the triangular lattice with periodic boundary conditions. (English summary) 
Mem. Amer. Math. Soc. {\bf 199}(2009), no. 935.
%
\bibitem{CK}
Ciucu, M. and Krattenthaler, C. : 
The interaction of a gap with a free boundary in a two dimensional 
dimer system, 
Commun. Math. Phys. {\bf 302} (2011) 253-289.
%
\bibitem{Fomin}
Fomin, S. :
Loop-erased walks and total positivity, 
Trans. Amer. Math. Soc. {\bf 353} No. 9(2001), 
p.3563-3583.
%
\bibitem{K}
Kenyon, R. : 
Lectures on Dimers, 
in Statistical Mechanics, 
Scott Sheffield, Thomas Spencer, Editors, 
IAS/Park city Mathematics Series, 
AMS. 
%
\bibitem{KW1}
Kenyon, Wilson : 
Boundary Partitions in Trees and Dimers, 
Trans. Amer. Math. Soc. {\bf 363} no.3 (2011), 1325-1364.
%
%
\bibitem{KW2}
Kenyon, Wilson : 
Combinatorics of Tripartite Boundary Connections 
for Trees and Dimers, 
Electron. J. Combin. {\bf 16}, no.1 (2009), 
%
\bibitem{NOS}
Fumihiko Nakano, Hirotaka Ono and Taizo Sadahiro 
``Connectedness of domino tilings with diagonal impurities", Discrete Mathematics, {\bf 310}(2010), p.1918-1931. 
%
\bibitem{NS1}
Fumihiko Nakano and Taizo Sadahiro
``Domino tilings with diagonal impurities", 
the special issue of Fundamenta Informaticae 
dedicated to the conference Lattice Path Combinatorics held in Siena. Vol. {\bf 117} (2012)  249-264. 
%
arXiv:0901.4824, 
%
\bibitem{NS2}
Fumihiko Nakano and Taizo Sadahiro  
``A bijection theorem for domino tiling with diagonal impurities", 
Journal of Statistical Physics, Vol. {\bf 139}, No. 4(2010), p.565-597.
arXiv : 0907.3252(math.CO, math.PR)
%
\bibitem{Wilson}
Wilson, D., 
Generating random spanning trees more quickly than the cover time, 
Proceedings of the 28th ACM on the Theory of Computing, 
ACM, New York, 1996, pp.296-303. 
%
\end{thebibliography}
\end{document}